\theoremstyle{plain}
    \newtheorem{thm}{Theorem}[section]
    \newtheorem{lem}[thm]   {Lemma}
    \newtheorem{cor}[thm]   {Corollary}
    \newtheorem{prop}[thm]  {Proposition}
    \newtheorem{question}[thm]{Question}
\theoremstyle{definition}
    \newtheorem{defn}[thm]  {Definition}
    \newtheorem{conj}[thm]{Conjecture}
    \newtheorem{ex}[thm]{Example}
    \newtheorem{rem}[thm]{Remark}
\def\A{{\mathcal A}}
\def\max{\mathrm{max}}
\def\Hom{\mathrm{Hom}}
\def\Ext{\mathrm{Ext}}
\def\Mod{\mathrm{Mod}}
\def\Ab{\mathrm{Ab}}
\def\cat{\mathsf{cat}}
\def\secat{\mathsf{secat}}
\def\dim{\mathrm{dim}}
\def\ker{\mathrm{ker}}
\def\res{\mathrm{res}}
\newcommand{\be}{\begin{enumerate}}
\newcommand{\ee}{\end{enumerate}}
\newcommand{\R}{\mathbb{R}}
\newcommand{\Z}{\mathbb{Z}}
\newcommand{\F}{\mathcal{F}}
\newcommand{\FIN}{\mathcal{FIN}}
\newcommand{\VCYC}{\mathcal{VCYC}}
\newcommand{\PP}{\mathcal{P}}
\newcommand{\FG}{\mathcal{G}}
\newcommand{\ul}{\underline}
\newcommand{\CC}{\mathcal{C}}
\newcommand{\nsub}{\mathrel{\unlhd}}
\newcommand{\cld}{{\sf cd}}
\newcommand{\gld}{{\sf gd}}
\newcommand{\catG}{{\sf cat}_G}
\newcommand{\secatG}{{\sf secat}_G}
\newcommand{\cldG}{{\sf cd}_\mathcal{G}}
\newcommand{\gldG}{{\sf gd}_\mathcal{G}}
\newcommand{\OG}{\mathcal{O}G}
\newcommand{\OF}{\mathcal{O}_\mathcal{F}}
\newcommand{\G}{\Gamma}
\newcommand{\GG}{\pi\rtimes G}
\newcommand{\OFG}{\mathcal{O}_\F\Gamma}
\newcommand{\OFGG}{\mathcal{O}_\FG (\pi\rtimes G)}
\begin{document}

\title[Equivariant dimensions]{Equivariant dimensions of groups with operators}

\author{Mark Grant}

\author{Ehud Meir}

\author{Irakli Patchkoria}

\address{Institute of Mathematics,
Fraser Noble Building,
University of Aberdeen,
Aberdeen AB24 3UE,
UK}

\email{mark.grant@abdn.ac.uk}

\email{ehud.meir@abdn.ac.uk}

\email{irakli.patchkoria@abdn.ac.uk}

\date{\today}

\keywords{equivariant group cohomology, equivariant Lusternik--Schnirelmann category, classifying spaces}
\subjclass[2010]{55N91, 20J05 (Primary); 55M30, 20E36 (Secondary).}

\begin{abstract} Let $\pi$ be a group equipped with an action of a second group $G$ by automorphisms. We define the equivariant cohomological dimension $\cld_G(\pi)$, the equivariant geometric dimension $\gld_G(\pi)$, and the equivariant Lusternik-Schnirelmann category $\cat_G(\pi)$ in terms of the Bredon dimensions and classifying space of the family of subgroups of the semi-direct product $\pi\rtimes G$ consisting of sub-conjugates of $G$. When $G$ is finite, we extend theorems of Eilenberg--Ganea and Stallings--Swan to the equivariant setting, thereby showing that all three invariants coincide (except for the possibility of a $G$-group $\pi$ with $\cat_G(\pi)=\cld_G(\pi)=2$ and $\gld_G(\pi)=3$). A main ingredient is the purely algebraic result that the cohomological dimension of any finite group with respect to any family of proper subgroups is greater than one. This implies a Stallings--Swan type result for families of subgroups which do not contain all finite subgroups.
\end{abstract}


\maketitle
\section{Introduction}\label{sec:intro}

The purpose of this article is to show that famous theorems of Eilenberg and Ganea \cite{EG} and Stallings \cite{Sta} and Swan \cite{Swa} relating three quantities associated to discrete groups---the geometric dimension, the cohomological dimension and the Lusternik--Schnirelmann category of a classifying space---admit equivariant generalisations to the setting of groups with operators.

Let $\pi$ be a discrete group. A connected CW-complex whose fundamental group is $\pi$ and whose higher homotopy groups are all trivial is called a $K(\pi,1)$-complex. Such a space is unique up to homotopy type. The \emph{geometric dimension} of $\pi$, denoted $\gld(\pi)$, is the minimal dimension of a $K(\pi,1)$ complex (alternatively, the minimal dimension of a contractible complex on which $\pi$ acts freely). The \emph{Lusternik--Schnirelmann (LS) category} of $\pi$ is $\cat(\pi):=\cat(K(\pi,1))$, the LS category of a $K(\pi,1)$ complex, which is well-defined by homotopy invariance. (Recall that the homotopy invariant $\cat(X)$ is defined to be the minimal integer $k$ for which there exists a cover of $X$ by open sets $U_0,\ldots , U_k$ such that each inclusion $U_i\hookrightarrow X$ is null-homotopic; see \cite{CLOT} for further details.) Finally, the \emph{cohomological dimension} of $\pi$, denoted $\cld(\pi)$, may be defined topologically as the minimal $d$ such that $H^{d+1}(K(\pi,1);\mathcal{M})=0$ for all local coefficient systems $\mathcal{M}$ on $K(\pi,1)$, or algebraically as the projective dimension of the trivial module $\Z$ in the category of $\pi$-modules. It is easy to check that if one of these invariants is zero then $\pi$ is the trivial group, and the other two invariants are zero as well. Note that all three invariants may be infinite; this happens for example if $\pi$ has torsion elements.

With these definitions, the theorems we will generalise are as follows. Eilenberg and Ganea showed in \cite{EG} that for any discrete group $\pi$ there is a chain of inequalities
\[
\cld(\pi)\leq \cat(\pi)\leq \gld(\pi)\leq \sup\{3,\cld(\pi)\},
\]
and furthermore if $\cld(\pi)=2$ then $\cat(\pi)=2$, and if $\cat(\pi)=1$ then $\pi$ is a free group and $\cld(\pi)=\gld(\pi)=1$. As a consequence of a more general theorem about ends of groups, proved by Stallings \cite{Sta} in the finitely generated case and extended to the general case by Swan \cite{Swa}, a group has $\cld(\pi)=1$ if and only if it is free (if and only if $\cat(\pi)=\gld(\pi)=1$). Hence we see that all three invariants are equal, except for the possibility of a group $\pi$ with $\cld(\pi)=\cat(\pi)=2$ and $\gld(\pi)=3$. The statement that such groups cannot exist has become known as the Eilenberg--Ganea conjecture, and remains unsolved.

Suppose now that a second discrete group $G$ acts on $\pi$ by automorphisms. Then $\pi$ will be called a \emph{group with operators in $G$}, or a \emph{$G$-group} for short. We will define the equivariant cohomological dimension, equivariant geometric dimension and equivariant LS category of such groups with operators, and show that the theorems of Eilenberg--Ganea and Stallings--Swan generalize to the equivariant setting when $G$ is finite.

Our definitions will employ the notions of Bredon cohomology and classifying spaces with respect to families of subgroups, which we now briefly recall (full definitions and references will be given in Section \ref{section: preliminaries} below). A non-empty collection $\F$ of subgroups of a group $\Gamma$ is called a \emph{family} if it is closed under conjugation and taking subgroups. A classifying space for $\Gamma$ with respect to the family $\F$ is a $\Gamma$-CW complex $E_\F(\Gamma)$ such that every $\Gamma$-space with isotropy in $\F$ admits a $\Gamma$-map $X\to E_\F(\Gamma)$, unique up to $\G$-homotopy. This is equivalent to asking that the fixed sub-complex $E_\F(\Gamma)^H$ is empty for $H\notin\F$, and contractible for $H\in \F$. Such a classifying space always exists, and is unique up to $\G$-homotopy type. The \emph{geometric dimension of $\G$ with respect to the family $\F$}, denoted $\gld_\F(\Gamma)$, is the minimal dimension of a classifying space $E_\F(\G)$.

The \emph{orbit category} $\OF\Gamma$ has as objects the $\Gamma$-sets $\Gamma/H$ for $H\in \F$ and as morphisms the $\Gamma$-maps. An $\OF\Gamma$-module is a contravariant functor from $\OF\Gamma$ to the category of abelian groups. The category of $\OF\G$-modules is an abelian category with enough projectives, so homological algebra machinery may be applied there. The \emph{cohomological dimension of $\G$ with respect to the family $\F$}, denoted $\cld_\F(\G)$, is defined to be the projective dimension of the constant $\OF\G$-module $\underline{\Z}$, which takes the value $\Z$ on all objects $\G/H$ and the identity on all morphisms. Equivalently, $\cld_\F(\G)$ is the least dimension $d$ such that $H^{d+1}(\OF\G;M) := \operatorname{Ext}^{d+1}_{\OF\G}(\ul{\Z},M)=0$ for all $\OF\G$-modules $M$.  Since the augmented cellular chain complex of a classifying space $E_\F(\G)$ is a projective resolution of $\underline{\Z}$, one has $\cld_\F(\G)\le\gld_\F(\G)$. By a theorem of L\"uck and Meintrup \cite{Lueck-Meintrup} one has $\gld_\F(\G)\le \sup\{3, \cld_\F(\G)\}$, generalizing part of the Eilenberg--Ganea result mentioned above.

Returning to the case of a $G$-group $\pi$, we consider the family of subgroups of the semi-direct product $\pi\rtimes G$ generated by the base group $1\times G\cong G$. That is, we let $\FG=\F\langle G\rangle$ denote the family of subgroups of $\pi\rtimes G$ which are conjugate to a subgroup of $G$.

\begin{defn}
The \emph{equivariant geometric dimension} and \emph{equivariant cohomological dimension} of the $G$-group $\pi$ are defined respectively by
\[
\gld_G(\pi):=\gld_\FG(\pi\rtimes G)\quad\mbox{and}\quad\cld_G(\pi):=\cld_\FG(\pi\rtimes G).
\]
\end{defn}

Recall that for a $G$-space $X$ with $G$ a compact Lie group, the equivariant LS category $\cat_G(X)$ was defined in \cite{Marz} to be the minimal integer $k$ for which there exists a cover of $X$ by open $G$-invariant subsets $U_0,\ldots , U_k$ such that each inclusion $U_i\hookrightarrow X$ is $G$-homotopic to a map with values in a single orbit. This notion is $G$-homotopy invariant. The equivariant LS category of the $G$-group $\pi$ is defined to be the equivariant LS category of a $G$-homotopy type of $K(\pi,1)$'s, described as follows. The classifying space $E_\FG(\GG)$ for the family $\FG$ described above is a contractible space on which $\pi\cong \pi\times 1$ acts freely, so the orbit space $E_\FG(\GG)/\pi$ is a $K(\pi,1)$. Since $E_\FG(\GG)$ is unique up to $(\GG)$-homotopy equivalence, the orbit space $E_\FG(\GG)/\pi$ is unique up to $G$-homotopy equivalence.

\begin{defn}
The \emph{equivariant LS category} of the $G$-group $\pi$ is defined to be
\[
\cat_G(\pi):=\cat_G(E_\FG(\GG)/\pi).
\]
\end{defn}

\begin{rem}
In Lemma \ref{Epimodel} below we show that as a model for the classifying space $E_\FG(\GG)$ we may take $E\pi$, the infinite join of copies of $\pi$, so that $\cat_G(\pi)=\cat_G(B\pi)$, the equivariant LS category of Milnor's classifying space for $\pi$. Note that $B\pi$, while infinite dimensional, may have the $G$-homotopy type of a finite complex. We remark that $E_\FG(\GG)/\pi$ (and in particular $B\pi$) is an Eilenberg--Mac Lane $G$-space of type $(\pi^{(-)},1)$ in the sense of Elmendorf \cite{Elmendorf}, where $\pi^{(-)}:\OG\to \mathsf{Grp}$ given by $G/H\mapsto \pi^H$ is the $\OG$-group determined by the system of fixed subgroups of $\pi$. Here $\OG$ is the orbit category for the family of all subgroups of $G$.
\end{rem}

With all these definitions in place, we can now state our main results.

\begin{thm}[Equivariant Eilenberg--Ganea Theorem]\label{EqEG}
Let $\pi$ be a discrete $G$-group, where $G$ is finite. Then the chain of inequalities
\[
\cld_G(\pi)\leq \cat_G(\pi)\leq \gld_G(\pi)\leq \sup\{3, \cld_G(\pi)\}
\]
is satisfied. Furthermore, if $\cld_G(\pi)=2$ then $\cat_G(\pi)=2$.
\end{thm}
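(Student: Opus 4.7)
My strategy is to establish the three displayed inequalities in turn, and then refine the argument to handle the furthermore statement. The rightmost inequality $\gld_G(\pi)\leq\sup\{3,\cld_G(\pi)\}$ is immediate from the L\"uck--Meintrup theorem recalled in the introduction, applied to $\Gamma=\pi\rtimes G$ with family $\FG$. For the middle inequality $\cat_G(\pi)\leq\gld_G(\pi)$, take a $(\pi\rtimes G)$-CW model $E$ for $E_\FG(\pi\rtimes G)$ of dimension $n=\gld_G(\pi)$; since $\pi$ acts freely on $E$, the orbit space $X:=E/\pi$ inherits an $n$-dimensional $G$-CW structure, and the standard dimension upper bound for equivariant LS-category of $G$-CW complexes gives $\cat_G(X)\leq n$.

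For the leftmost inequality $\cld_G(\pi)\leq\cat_G(\pi)$, set $k=\cat_G(\pi)$ and pick a categorical $G$-cover $U_0,\ldots,U_k$ of $X$. Each inclusion $U_i\hookrightarrow X$ is $G$-homotopic to a map into a single $G$-orbit $G/H_i$ with $H_i\leq G$, so $H_i\in\FG$. Lifting these $G$-nullhomotopies along the principal $\pi$-bundle $E\to X$, and passing to the equivariant nerve of the resulting $(\pi\rtimes G)$-invariant open cover of $E$, one assembles a length-$k$ resolution of $\ul\Z$ by free $\OFGG$-modules; hence $\cld_G(\pi)\leq k$.

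For the furthermore claim, the three inequalities already give $2\leq\cat_G(\pi)\leq\gld_G(\pi)\leq 3$, and the remaining task is to rule out $\cat_G(\pi)=3$. Here I would invoke an equivariant Ganea characterization: $\cat_G(X)\leq k$ iff an equivariant Ganea fibration $p_k\colon G^G_k(X)\to X$ admits a $G$-section. For $X=B\pi$ and $H\in\FG$, the $H$-fixed points of the fibre of $p_k$ identify (using finiteness of $G$) with the $(k+1)$-fold join of the discrete set $\pi^H$, which is $(k-1)$-connected; the obstructions to a $G$-section therefore lie in Bredon cohomology of degree $\geq k+1$. When $k=2$ and $\cld_G(\pi)=2$, all such obstructions vanish, yielding a section and the desired bound $\cat_G(\pi)\leq 2$. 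The main obstacle is precisely this equivariant Ganea-obstruction argument: constructing the equivariant Ganea fibration so that its fibre has the required equivariant connectivity at every $H\in\FG$, and verifying that obstructions indeed take values in Bredon cohomology over $\OFGG$. The finiteness of $G$ is essential here, since it makes orbit fixed-point sets discrete and the equivariant joins behave as in the classical case.
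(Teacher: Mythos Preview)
Your argument for $\cld_G(\pi)\leq\cat_G(\pi)$ has a genuine gap: the ``equivariant nerve'' of a cover of $E$ by $k+1$ $(\pi\rtimes G)$-invariant open sets is just a $k$-simplex with trivial action, which does not produce a resolution of $\ul\Z$ over $\OFGG$. Even interpreted more charitably---using the lifted homotopies to build a $(\pi\rtimes G)$-map from $E$ into a join of $k+1$ orbits---that join need not have acyclic $H$-fixed points for every $H\in\FG$, so its cellular chains do not yield a resolution either. The paper handles this by first proving a single characterization (Proposition~\ref{prop:cat=secat}, Corollaries~\ref{cor:equivmap} and~\ref{cor:catGhtpyretract}): $\cat_G(\pi)$ equals the least $k$ for which $E\pi$ is a $(\pi\rtimes G)$-homotopy retract of some $k$-dimensional $(\pi\rtimes G)$-complex, equivalently the least $k$ admitting a $(\pi\rtimes G)$-map $E\pi\to E_k\pi$. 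Both middle inequalities then follow in one line each: $\gld_G(\pi)\leq k$ exhibits $E\pi$ as $(\pi\rtimes G)$-homotopy equivalent to (hence a retract of) a $k$-dimensional model, while a retraction through a $k$-dimensional $L$ forces $H^i_\FG(E\pi;M)$ to factor through $H^i_\FG(L;M)=0$ for all $i>k$.

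Two smaller points. Your ``standard dimension bound'' $\cat_G(X)\leq\dim X$ requires each $X^H$ to be non-empty and path-connected, so that disjoint unions of orbits can be $G$-deformed into a single $G$-fixed point; this is not automatic for $X=E/\pi$, and is exactly what the paper verifies as condition~(ii) in Proposition~\ref{prop:cat=secat}. In your furthermore argument, the obstructions to a $G$-section of $p_k$ over $B\pi$ lie a priori in Bredon cohomology over $\OG$ with local coefficients, not over $\OFGG$, and identifying the two is a nontrivial extra step you flag but do not carry out; the paper sidesteps this by working upstairs with $(\pi\rtimes G)$-obstruction theory, constructing a map $E\pi\to E\pi^{(2)}$ whose obstructions lie directly in $H^{n+1}(\OFGG;\ul\pi_n(E\pi^{(2)}))$ and vanish for $n\leq1$ by simple connectivity of the fixed sets and for $n\geq2$ by the hypothesis $\cld_G(\pi)=2$.
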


In light of the general inequalities $\cld_\F(\Gamma)\leq \gld_\F(\G)\leq \sup\{3, \cld_\F(\G)\}$ alluded to above, the new contribution of this result is the definition of the equivariant LS category of a $G$-group, and its determination in terms of homological algebra. When the group $G$, or more generally its action on $\pi$, is trivial, we shall see below that the equivariant cohomological and geometric dimensions agree with their non-equivariant counterparts. Hence Theorem \ref{EqEG} generalizes the classical Eilenberg--Ganea theorem.

It is easily verified that $\cld_\F(\G)=0$ if and only if $\gld_\F(\G)=0$ if and only if $\F$ contains $\G$ (and therefore is the family of all subgroups of $\G$). Thus all three equivariant dimensions are zero precisely when the group $\pi$ is trivial. The second main result of this paper characterises $G$-groups of equivariant cohomological dimension one, assuming $G$ is finite.

\begin{thm}[Equivariant Stallings--Swan Theorem]\label{EqSS}
Let $\pi$ be a discrete $G$-group, where $G$ is finite. The following are equivalent:
\be
\item $\gld_G(\pi)=1$;
\item $\cat_G(\pi)=1$;
\item $\cld_G(\pi)=1$;
\item $\pi$ is a non-trivial free group with basis a $G$-set.
\ee
\end{thm}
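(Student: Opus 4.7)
The implications (1) $\Rightarrow$ (2) $\Rightarrow$ (3) follow from the chain of inequalities $\cld_G(\pi) \leq \cat_G(\pi) \leq \gld_G(\pi)$ of Theorem \ref{EqEG}, together with the observation (made just before this theorem) that all three invariants vanish iff $\pi$ is trivial. For (4) $\Rightarrow$ (1), I plan to take the Cayley graph $T$ of $\pi$ with respect to the $G$-set basis $S$. On this tree $\pi$ acts freely by left multiplication and $G$ acts via its action on $\pi$, which preserves $S$; since $S \cap S^{-1} = \emptyset$ in any free basis, no element of $\GG$ inverts an edge. The vertex stabilizer at $g\in\pi$ is the conjugate $(g,1)G(g,1)^{-1} \in \FG$, and for each $K\in\FG$ the fixed set $T^K$ is a non-empty (hence contractible) subtree. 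Thus $T$ is a $1$-dimensional model for $E_\FG(\GG)$, and $\gld_G(\pi) = 1$ since $\pi \neq 1$.

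The main implication (3) $\Rightarrow$ (4) proceeds in three stages. First, the restriction of $\FG$ to $\pi\leq\GG$ is the trivial family (any $\FG$-subgroup meets $\pi$ trivially), so the restriction inequality gives $\cld(\pi) \leq \cld_\FG(\GG) = 1$ and the classical Stallings--Swan theorem forces $\pi$ to be a non-trivial free group. Second, I apply the algebraic ingredient highlighted in the abstract: any family of proper subgroups of a finite group $H$ has cohomological dimension at least $2$. If some finite $H \leq \GG$ were not in $\FG$, then $\FG|_H$ would consist of proper subgroups of $H$ and the restriction bound $\cld_{\FG|_H}(H) \leq \cld_\FG(\GG) = 1$ would contradict this, so every finite subgroup of $\GG$ lies in $\FG$. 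Third, since $\pi$ is free and $G$ is finite, $\GG$ is virtually free, and by Karrass--Pietrowski--Solitar (with its extensions to the non-finitely-generated setting) it arises as $\pi_1$ of a graph of finite groups $Y$ whose vertex and edge groups, by the previous step, all lie in $\FG$; its Bass--Serre tree is a $1$-dimensional model for $E_\FG(\GG)$.

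Finally, I plan to read off a $G$-basis for $\pi$ from $Y$. Using the distinguished section $G\hookrightarrow\GG$, one arranges the vertex and edge groups of $Y$ as honest subgroups of $G$ with inclusion edge-monomorphisms, so in particular $Y$ has a vertex $v_0$ with $A_{v_0} = G$. A Bass--Serre analysis of fixed subtrees shows that ``$A_v \in \FG$'' is equivalent to the existence of a path in $Y$ from $v$ to $v_0$ along which every edge and vertex group contains $A_v$; the first edge of such a path is necessarily matching (has edge group equal to $A_v$) and hence collapsible. Iterating these collapses, and using that the ``all finite subgroups in $\FG$'' property is preserved at each step, reduces $Y$ to a rose with a single vertex of group $G$ and loops with edge groups $C_i \leq G$; the resulting presentation of $\GG$ as an iterated HNN extension of $G$ over the $C_i$ exhibits $\pi = \ker(\GG \to G)$ as the free group on the $G$-set $\bigsqcup_i G/C_i$. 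The main obstacle I anticipate is rigorously justifying the reduction of $Y$ to a rose and verifying that the collapses can be carried out compatibly with the $G$-action and the distinguished section.
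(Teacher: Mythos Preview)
Your outline matches the paper through the first two stages of $(3)\Rightarrow(4)$: Shapiro plus classical Stallings--Swan gives $\pi$ free, then Theorem~\ref{mainalg} with Shapiro forces $\FG=\FIN$, and Dunwoody (the paper cites KPS as an alternative) produces a tree $T$ with finite stabilizers. From there the routes diverge. The paper passes to the $G$-graph $X=T/\pi$ and shows it has a $G$-invariant spanning tree via a lemma of Kano--Sakamoto, which reduces the problem to connectedness of each $X^{G_v}$; this in turn is proved using Proposition~\ref{nonabelian}, which converts $\FG=\FIN$ into the vanishing of the non-abelian cohomology sets $H^1(H;\pi)$ for all $H\le G$, so that every vertex of $X$ admits an $H$-fixed lift in $T$ and hence a path in $X^H$ to the basepoint. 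Collapsing the spanning tree yields the $G$-rose. Your proposal instead works with the graph of groups $Y=T/(\GG)$ and collapses it to a rose; the endgame identification of $\pi$ as free on $\bigsqcup_i G/C_i$ is correct.

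The obstacle you anticipate is real and sits precisely at the ``arrangement'' step. Arranging all vertex and edge groups as honest subgroups of $G$ with inclusion edge-monomorphisms amounts to choosing a lifted subtree $\tilde\tau\subset T$ with $\operatorname{Stab}(\tilde v)\le G$ for every $\tilde v\in\tilde\tau$. From $\FG=\FIN$ you only know each stabilizer is $\GG$-conjugate into $G$; lying \emph{inside} $G$ means the associated $1$-cocycle $K\to\pi$ is trivial, whereas $\FG=\FIN$ (this is exactly Proposition~\ref{nonabelian}) only says it is principal, and the $\pi$-translate that trivializes it need not remain adjacent to the previously chosen lifts. Since your path criterion (``$A_v\in\FG$ iff there is a path in $Y$ whose groups all contain $A_v$'') presupposes the arrangement, it cannot be used to produce it. A clean fix bypasses the arrangement and argues in $T$ directly: since $A_v=\operatorname{Stab}(\tilde v)$ is conjugate into $G=\operatorname{Stab}(\tilde v_0)$, it fixes some translate $\gamma\tilde v_0$; along the geodesic from $\tilde v$ to $\gamma\tilde v_0$ every stabilizer contains $A_v$, so the first edge on it that leaves the $\GG$-orbit of $\tilde v$ projects to a non-loop edge of $Y$ whose monomorphism at the $v$-end is an isomorphism. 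This supplies a collapsible edge at every vertex $v\neq v_0$ and the iteration goes through without ever needing all $A_v\le G$ simultaneously. In the end, the paper's non-abelian cohomology input and your Bass--Serre geodesic are the same mechanism expressed in two languages.
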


When $G$ acts trivially, the implication $(3)\implies (4)$ is the Stallings--Swan theorem. Our proof of Theorem \ref{EqSS} uses a strengthening of Stallings--Swan due to Dunwoody \cite{Dunwoody}, which may be stated as follows. As is customary, we denote by $\ul\cld(\G)$ and $\ul\gld(\G)$ the cohomological and geometric dimensions of the group $\G$ with respect to the family $\FIN$ of finite subgroups. Then Dunwoody shows that $\ul\cld(\G)=1$ if and only if $\ul\gld(\G)=1$. When $G$ is finite and $\pi$ is torsion-free, the condition $\cld_G(\pi)=1$ entails that our family $\FG$ of subgroups of $\pi\rtimes G$ coincides with $\FIN$. This is not obvious, and is a consequence of the following theorem, which is our main algebraic result.

\begin{thm}\label{mainalg}
Let $\G$ be a finite group, and let $\F$ be any family of proper subgroups of $\G$. Then $\cld_\F(\G)\geq 2$.
\end{thm}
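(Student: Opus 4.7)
The plan is to argue by contradiction. Assume $\cld_\F(\G)\le 1$; since $\G\notin\F$ already gives $\cld_\F(\G)\ge 1$, we have $\cld_\F(\G)=1$ and hence a length-one projective resolution $0\to P_1\to P_0\to\ul{\Z}\to 0$ of $\OF\G$-modules. Because $1\in\F$, evaluation at $\G/1$ is a legitimate exact functor into $\Z\G$-modules, producing a short exact sequence $0\to Q_1\to Q_0\to\Z\to 0$ in which each $Q_i=P_i(\G/1)$ is a $\Z\G$-summand of a direct sum of permutation modules $\Z[\G/H_\beta]$ with $H_\beta\in\F$. (Projective $\OF\G$-modules are direct summands of direct sums of representables $\Z[-,\G/H]$, and such a representable evaluated at $\G/1$ is the permutation module $\Z[\G/H]$.)

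The simplest case is when $\G$ is a finite $p$-group. Every proper $H<\G$ has $[\G:H]$ divisible by $p$; by Green's indecomposability theorem each $\mathbb{F}_p[\G/H]$ is an indecomposable $\mathbb{F}_p\G$-module of dimension $[\G:H]\equiv 0\pmod p$. By Krull--Schmidt, any $\mathbb{F}_p\G$-summand of a direct sum of such modules has $\mathbb{F}_p$-dimension divisible by $p$. Reducing the sequence mod $p$ then forces $\dim_{\mathbb{F}_p}\mathbb{F}_p Q_0-\dim_{\mathbb{F}_p}\mathbb{F}_p Q_1\equiv 0\pmod p$, contradicting the value $1$ imposed by exactness. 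The general finite case reduces to this via Sylow restriction: if $P\le\G$ is a Sylow $p$-subgroup with $P\notin\F$, then $\F|_P:=\{K\le P:K\in\F\}$ is a family of proper subgroups of $P$, and restriction along the functor $\mathcal{O}_{\F|_P}P\to\OF\G$, $P/K\mapsto\G/K$, sends each representable $\Z[-,\G/H]$ to the direct sum $\bigoplus_g\Z[-,P/L_g]$ corresponding to the $P$-orbit decomposition of $\G/H$ (where $L_g=P\cap gHg^{-1}\in\F|_P$). Hence restriction preserves projectives and $\cld_{\F|_P}(P)\le\cld_\F(\G)\le 1$, contradicting the $p$-group case.

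The main obstacle is the residual case in which $\F$ contains a Sylow $p$-subgroup for \emph{every} prime $p$ dividing $|\G|$---for instance $\G=S_3$ with $\F$ the family of all proper subgroups. Here the mod-$p$ Krull--Schmidt argument fails, since $\mathbb{F}_p[\G/H]$ acquires an $\mathbb{F}_p$-summand of dimension coprime to $p$ whenever $[\G:H]$ is coprime to $p$. In this case the plan is to invoke a structural theorem of Dunwoody type: a length-one projective resolution of $\ul{\Z}$ in $\OF\G$-modules should force $\gld_\F(\G)=1$, yielding a $\G$-tree $T$ with all vertex and edge stabilisers in $\F$. This is Dunwoody's theorem \cite{Dunwoody} for $\F=\FIN$, and admits a suitable generalisation to arbitrary families. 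Once such $T$ is in hand, Serre's classical fixed-point theorem for a finite group acting on a tree without inversions provides a vertex $v\in T$ with $\G v=v$; the stabiliser of $v$ both equals $\G$ and belongs to $\F$, contradicting the hypothesis that $\F$ consists of proper subgroups.
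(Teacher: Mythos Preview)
Your argument for $p$-groups (and the Sylow reduction when some Sylow subgroup is not in $\F$) is correct and rather elegant: evaluating at $\G/1$, applying Green's indecomposability to the $\mathbb F_p[\G/H]$, and counting dimensions modulo $p$ does give a clean contradiction. This part is genuinely different from the paper's approach and arguably more conceptual in the $p$-local case.

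The genuine gap is in your ``residual case'', where $\F$ contains a Sylow $p$-subgroup for every prime $p\mid |\G|$. There you write that a length-one projective resolution of $\ul\Z$ ``should force $\gld_\F(\G)=1$'' and that Dunwoody's theorem ``admits a suitable generalisation to arbitrary families.'' This is precisely Conjecture~\ref{conj:StallingsSwanFam} of the paper, which is open; indeed the paper proves Theorem~\ref{mainalg} in order to \emph{verify} that conjecture for families not containing $\FIN$. For a finite group $\G$, the family $\FIN$ is the family of \emph{all} subgroups, so Dunwoody's actual theorem says nothing here, and the asserted generalisation to a proper family $\F$ is exactly equivalent to the theorem you are trying to prove (once you know $\gld_\F(\G)=1$, Serre's fixed-point theorem immediately gives a contradiction, so the implication $\cld_\F(\G)=1\Rightarrow\gld_\F(\G)=1$ for finite $\G$ \emph{is} Theorem~\ref{mainalg}). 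Thus your step~3 is circular.

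For comparison, the paper closes this residual case by an entirely different route. It first reduces (via Shapiro and passage to quotients) to the case where $\G$ is a non-abelian finite simple group and $\F=\PP$ is the family of all proper subgroups. It then bounds $\cld_\PP(\G)$ below by the cohomological dimension of the \emph{poset} $\A_\PP(\G)$ of proper subgroups, and uses structural facts about maximal subgroups of simple groups (no Frobenius complement, intersections of maximal subgroups) to build a ``crown'' configuration in $\A_\PP(\G)$ which, by Cheng's criterion for posets of cohomological dimension one, forces $\cld(\A_\PP(\G))\ge 2$. If you want to salvage your approach, you would need a self-contained argument in the case where every Sylow lies in $\F$ that does not presuppose $\cld_\F=1\Rightarrow\gld_\F=1$.
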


The following conjecture appears to be well known among experts (see \cite{Ono}).
\begin{conj}\label{conj:StallingsSwanFam}
If $\G$ is \emph{any} group and $\F$ is \emph{any} family of subgroups of $\G$, then $\cld_\F(\G)=1$ if and only if $\gld_\F(\G)=1$.
\end{conj}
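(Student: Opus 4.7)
The easy implication, $\gld_\F(\Gamma)=1 \Rightarrow \cld_\F(\Gamma)=1$, is immediate: the augmented cellular chain complex of a one-dimensional model $E_\F(\Gamma)$ gives a projective resolution of $\ul\Z$ over $\OF\Gamma$ of length at most one, so $\cld_\F(\Gamma)\le 1$; equality follows because $\cld_\F(\Gamma)=0$ would force $\Gamma\in\F$ and hence $\gld_\F(\Gamma)=0$, contradicting the hypothesis.

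The substantive direction is $\cld_\F(\Gamma)=1 \Rightarrow \gld_\F(\Gamma)=1$. My plan would be to realise the short exact sequence
\[
0 \longrightarrow P_1 \longrightarrow P_0 \longrightarrow \ul\Z \longrightarrow 0
\]
of projective $\OF\Gamma$-modules supplied by the hypothesis as the augmented cellular chain complex of a $\Gamma$-tree $T$, on which $\Gamma$ acts with vertex and edge stabilizers in $\F$. By Bass--Serre theory this is equivalent to exhibiting $\Gamma$ as the fundamental group of a graph of groups with vertex and edge groups in $\F$, and any such tree $T$ automatically serves as a one-dimensional $E_\F(\Gamma)$.

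The intermediate steps would be: (i) normalise $P_0$ and $P_1$ so that they decompose as direct sums of modules of the form $\ul\Z[\Gamma/H]$ with $H\in \F$, thereby identifying candidate vertex and edge orbits; (ii) interpret the differential $P_1\to P_0$ as an incidence relation between these orbits and promote it to a $\Gamma$-equivariant graph; (iii) verify that the resulting graph is a tree, which should follow from acyclicity of the resolution together with the fact that $\ul\Z$ is the constant functor; and (iv) check that $T^H$ is non-empty precisely for $H\in\F$, an accessibility-type condition ensuring that the family $\F$ is captured faithfully by the vertex stabilizers.

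The principal obstacle lies in steps (i) and (iv). For the family $\FIN$ of finite subgroups, Dunwoody's theorem rests on the fact that almost-invariant subsets of $\Gamma$ over finite subgroups can be organised combinatorially into $\Gamma$-trees, combined with his accessibility theorem bounding the complexity of the resulting splittings. For an arbitrary family $\F$ no analogous tools are known: even the theory of ends of pairs $(\Gamma,H)$ developed by Scott, Bowditch, Sageev and others produces splittings only under additional hypotheses on the pair, and no general accessibility result limits how such splittings assemble into a single tree action with stabilizers precisely in $\F$. A full proof would plausibly require a fundamentally new construction producing $\Gamma$-trees directly from a length-one projective resolution of $\ul\Z$ over $\OF\Gamma$, bypassing the ends-of-pairs route entirely; this is presumably why the statement remains conjectural in the full generality claimed.
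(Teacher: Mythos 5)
The statement you were asked to prove is stated in the paper as Conjecture~\ref{conj:StallingsSwanFam}, not as a theorem: the paper offers no proof of it, and explicitly records that it is known only for the trivial family (Stallings--Swan), for $\FIN$ (Dunwoody), and for $\VCYC$ with $\G$ countable (Degrijse). Your own conclusion --- that the implication $\cld_\F(\G)=1\Rightarrow\gld_\F(\G)=1$ remains open in this generality for lack of an ends-of-pairs or accessibility theory adapted to arbitrary families --- is therefore exactly right, and your proof of the easy direction (a one-dimensional model gives a length-one free resolution of $\ul\Z$, and $\cld_\F(\G)=0$ would force $\G\in\F$ and hence $\gld_\F(\G)=0$) is correct and is the same observation the paper makes via the general inequality $\cld_\F(\G)\le\gld_\F(\G)$ together with the characterisation of when both vanish.

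Where your discussion can be sharpened is in how the paper actually makes progress on the conjecture, because its route is quite different from the tree-realisation programme you sketch. Rather than attempting to build a $\G$-tree from a length-one projective resolution over $\OFG$, the paper verifies the conjecture for every family $\F$ that fails to contain all finite subgroups, by showing that for such families the hypothesis $\cld_\F(\G)=1$ is simply impossible: if $H\le\G$ is finite with $H\notin\F$, then $H\cap\F$ is a family of proper subgroups of $H$, Theorem~\ref{mainalg} gives $\cld_{H\cap\F}(H)\ge 2$, and Shapiro's Lemma~\ref{lemma: Shapiro} forces $\cld_\F(\G)\ge 2$; on the geometric side, $\gld_\F(\G)=1$ would produce an action on a tree with no $H$-fixed point, contradicting the fixed-point property of finite groups acting on trees. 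So the only genuinely open case of the conjecture is for families containing $\FIN$, and there your assessment stands: one would need a construction producing $\G$-trees with prescribed stabilisers directly from the algebraic data, which is precisely what is missing.
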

To the best of our knowledge, this conjecture is proved in the literature only for the trivial family (by Stallings--Swan), the family $\FIN$ (by the result of Dunwoody mentioned above), and for the family $\VCYC$ of virtually cyclic subgroups, assuming $\G$ is countable (by a theorem of Degrijse \cite{Deg}). We were also informed that the forthcoming work of Petrosyan and Prytu\l a \cite{PP} shows that the conjecture holds for chamber transitive lattices in buildings with the family given by all the stabilisers. 

We observe that Theorem \ref{mainalg} verifies Conjecture \ref{conj:StallingsSwanFam} for a large class of families.
\begin{cor}
Let $\G$ be any group, and let $\F$ be any family of subgroups of $\G$ that does not contain the family $\FIN$ of finite subgroups. Then $\cld_\F(\G)=1$ if and only if $\gld_\F(\G)=1$.
\end{cor}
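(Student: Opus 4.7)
The plan is to reduce the corollary to Theorem~\ref{mainalg} via a restriction-to-a-finite-subgroup argument. By hypothesis there is a finite subgroup $H \leq \G$ with $H \notin \F$; I would introduce the restricted family $\F|_H := \{K \leq H : K \in \F\}$ of subgroups of $H$. Since $H \notin \F$ this consists entirely of proper subgroups of $H$, so Theorem~\ref{mainalg} applied to the finite group $H$ with this family yields $\cld_{\F|_H}(H) \geq 2$. If I can establish the restriction inequality $\cld_{\F|_H}(H) \leq \cld_\F(\G)$, then $\cld_\F(\G) \geq 2$; combined with the standard inequality $\cld_\F(\G) \leq \gld_\F(\G)$ recorded in the introduction, this also forces $\gld_\F(\G) \geq 2$. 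Both sides of the biconditional $\cld_\F(\G) = 1 \Leftrightarrow \gld_\F(\G) = 1$ are then false, so the biconditional is vacuously satisfied.

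The substantive step is the restriction inequality, which I would prove as follows. The inclusion $H \hookrightarrow \G$ induces a functor $\psi \colon \mathcal{O}_{\F|_H} H \to \OF\G$ sending $H/K \mapsto \G/K$ (well-defined since $K \in \F|_H \subseteq \F$). Restriction along $\psi$ gives an exact functor $\psi^*$ from $\OF\G$-modules to $\mathcal{O}_{\F|_H} H$-modules with $\psi^*\ul\Z = \ul\Z$. To see that $\psi^*$ carries projectives to projectives, it suffices to show it carries the free generators $P_{\G/L}$ (for $L \in \F$) to free $\mathcal{O}_{\F|_H} H$-modules. Using the double-coset decomposition $\G/L \cong \coprod_{[g] \in H \backslash \G / L} H/(H \cap gLg^{-1})$ of the underlying $H$-set, a pointwise calculation should give
\[
\psi^* P_{\G/L} \;\cong\; \bigoplus_{[g] \in H \backslash \G / L} P_{H/(H \cap gLg^{-1})}
\]
as $\mathcal{O}_{\F|_H} H$-modules, and each $H \cap gLg^{-1}$ lies in $\F|_H$ because $\F$ is closed under conjugation and subgroups. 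Applying $\psi^*$ termwise to a length-$\cld_\F(\G)$ projective resolution of $\ul\Z$ over $\OF\G$ will then produce such a resolution over $\mathcal{O}_{\F|_H} H$, proving the inequality.

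The main obstacle I anticipate is the careful verification of the free-module decomposition and the bookkeeping around morphisms of orbit categories; this should be a standard Shapiro-lemma-style argument in Bredon cohomology and should not require any deep new input. The crux of the corollary is really Theorem~\ref{mainalg}; the restriction principle merely transfers the lower bound from the finite subgroup $H$ to the ambient group $\G$.
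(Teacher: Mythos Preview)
Your proposal is correct and follows essentially the same approach as the paper: pick a finite $H\notin\F$, apply Theorem~\ref{mainalg} to the restricted family to get $\cld_{\F|_H}(H)\ge 2$, and transfer this to $\G$ via the restriction inequality. Two remarks. First, the restriction inequality you re-derive is exactly Shapiro's Lemma (Lemma~\ref{lemma: Shapiro}), so you need not re-prove it. Second, for the geometric side the paper argues independently that $\gld_\F(\G)=1$ is impossible, invoking Serre's theorem that a finite group acting on a tree has a fixed point (so $H$ would have to lie in $\F$); your route via the general inequality $\cld_\F(\G)\le\gld_\F(\G)$ is equally valid and in fact more economical.
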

\begin{proof} Suppose $H\le\G$ is finite and not in $\F$. Then Shapiro's Lemma \ref{lemma: Shapiro} and Theorem \ref{mainalg} yield
\[
2\le \cld_{H\cap\F}(H)\le\cld_\F(\G).
\]
Thus $\cld_\F(\G)=1$ is impossible. On the other hand, $\gld_\F(\G)=1$ would imply that $\G$ acts on a tree without $H$-fixed points, which is also impossible \cite{Serre}.
\end{proof}

\begin{rem}\label{rem:equivgroupcohom}
Theorems \ref{EqEG} and \ref{EqSS} suggest that the equivariant group cohomology of the $G$-group $\pi$ with coefficients in an $\OFGG$-module $M$ should be defined by $H^*_G(\pi;M):=H^*(\OFGG;M)=\operatorname{Ext}^*_{\OFGG}(\underline{\Z},M)$. Given a $\GG$-module $N$, one obtains a $\OFGG$-module $N^{(-)}$ by taking fixed points (this is a form of co-induction). For such coefficient modules, our definition agrees with that of Inassaridze \cite{In}, who defines the equivariant group cohomology of $\pi$ with coefficients in $N$ to be $H^*(\GG,G;N)$, the relative group cohomology in the sense of Hochschild \cite{Hoch} and Adamson \cite{Adamson} (see also Benson \cite[Section 3.9]{Benson}). As observed in \cite[Section 2]{PY}, one has an isomorphism $H^*(\GG,G;N)\cong H^*(\OFGG; N^{(-)})$. Hence our definition generalizes that of Inassaridze by allowing as coefficients arbitrary $\OFGG$-modules which may not be co-induced from $\GG$-modules. In theory our $\cld_G(\pi)$ could exceed the equivariant cohomological dimension derived from Inassaridze's definition, but we do not currently know any examples where this is the case.

We mention also the paper of Cegarra--Garc\'ia-Calcines--Ortega \cite{CG-CO} which predates \cite{In} and contains a slightly different definition of equivariant group cohomology with coefficients in a $\GG$-module.
\end{rem}

Combining Theorems \ref{EqEG} and \ref{EqSS} gives the following Corollary and Question.

\begin{cor}
If $\pi$ is a discrete $G$-group with $G$ finite, then
\[
\cld_G(\pi)=\cat_G(\pi)=\gld_G(\pi),
\]
except for the possibility of a $G$-group $\pi$ with $\cld_G(\pi)=\cat_G(\pi)=2$ and $\gld_G(\pi)=3$. In particular, $\cat_G(\pi)=\cld_G(\pi)$ always.
\end{cor}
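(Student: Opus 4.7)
The plan is to prove this corollary by a direct case analysis on the value of $\cld_G(\pi)$, using the two main theorems and the observation recalled just before Theorem \ref{EqSS} that $\cld_G(\pi)=0$ iff $\gld_G(\pi)=0$ iff $\pi$ is trivial (in which case $\cat_G(\pi)=0$ as well). All inequalities come from \thmref{EqEG}, and the sharp equalities in low dimensions come from \thmref{EqSS}.

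First I would dispose of the trivial case $\cld_G(\pi)=0$, where all three invariants vanish. Next, if $\cld_G(\pi)=1$, then \thmref{EqSS} immediately gives $\cat_G(\pi)=\gld_G(\pi)=1$. If $\cld_G(\pi)\ge 3$, then $\sup\{3,\cld_G(\pi)\}=\cld_G(\pi)$, so the chain of inequalities in \thmref{EqEG} collapses to
\[
\cld_G(\pi)\le \cat_G(\pi)\le \gld_G(\pi)\le \cld_G(\pi),
\]
forcing all three to agree. The case $\cld_G(\pi)=\infty$ is handled by the same chain of inequalities, which forces $\cat_G(\pi)=\gld_G(\pi)=\infty$. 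This leaves the case $\cld_G(\pi)=2$, where \thmref{EqEG} guarantees $\cat_G(\pi)=2$ and $\gld_G(\pi)\in\{2,3\}$; this is precisely the exceptional situation described in the statement (and parallels the classical Eilenberg--Ganea conjecture).

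For the final "in particular" assertion, I would observe that in every case above, $\cat_G(\pi)=\cld_G(\pi)$: this was verified directly for $\cld_G(\pi)\in\{0,1,2\}$, while for $\cld_G(\pi)\ge 3$ or $\cld_G(\pi)=\infty$ it follows from the collapsed chain. So there is no obstacle at all --- the work is entirely in the two main theorems. The only subtlety worth flagging is that the borderline case $\cld_G(\pi)=2$ is the exact equivariant analogue of the open Eilenberg--Ganea conjecture, and cannot be eliminated with the methods of this paper.
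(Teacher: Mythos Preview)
Your proof is correct and is precisely the argument the paper has in mind: the corollary is stated immediately after the two main theorems with the sentence ``Combining Theorems \ref{EqEG} and \ref{EqSS} gives the following Corollary,'' and your case analysis on $\cld_G(\pi)$ is the straightforward way to effect that combination. There is nothing to add.
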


\begin{question}[Equivariant Eilenberg--Ganea Conjecture]
Does there exist a $G$-group $\pi$ with $\cld_G(\pi)=2$ and $\gld_G(\pi)=3$?
\end{question}

The structure of the paper is as follows. In Section 2 we recall necessary material on Bredon cohomology and cohomology of small categories in general, and derive some basic facts about equivariant dimensions as specializations. In Sections 3 and 4 we prove Theorems \ref{EqEG} and \ref{EqSS} respectively. The proof of Theorem \ref{EqSS} relies on Theorem \ref{mainalg}, whose proof is given in Section 5.

In writing the paper we have benefited from conversations with many people, including Dave Benson, Dieter Degrijse, Michael Farber, Ellen Henke, Ian Leary, Ran Levi, Assaf Libman, Greg Lupton, Brita Nucinkis, Bob Oliver and John Oprea. In particular, the first author thanks his co-authors of the paper \cite{FGLO} which inspired many of the results in Section 3 of the present paper.

\section{Preliminaries on Bredon cohomology}\label{section: preliminaries}

We now recall the necessary material on Bredon cohomology with respect to families, and cohomology of small categories more generally.
In this section $\G$ will denote an arbitrary discrete group.

\begin{defn} A set of subgroups $\F$ of $\G$ is called a \emph{family} if $\F$ is closed under conjugations and taking subgroups.
\end{defn}

It is often convenient to consider $\G$-CW complexes (see \cite[Section I.1]{Lueck}) with isotropy in the family $\F$. Such $\G$-CW complexes are $\G$-spaces built out of cells of type $\G/H \times D^n$, $n \in \Z$, where $H \in \F$. The \emph{classifying $\G$-space} $E_\F(\G)$ is the universal $\G$-CW complex with isotropy in $\F$ in the following sense: For any $\G$-CW complex $X$ with isotropy in $\F$, there is a continuous $\G$-map $X \to E_\F(\G)$ which is unique up to $\G$-homotopy. The $\G$-space $E_\F(\G)$ is uniquely characterised up to $\G$-homotopy equivalence by the following properties: $E_\F(\G)$ is a $\G$-CW complex and the $H$-fixed subspace $E_\F(\G)^H$ is contractible if $H \in \F$ and empty otherwise. Any such space is called \emph{a model for $E_\F(\G)$}.

There are many ways to construct such a classifying space, see for example \cite[Chapter I, Proposition 2.3]{Lueck} or \cite[Definition 2.1]{Lueck-Oliver}. To sketch the latter construction, we recall first the \emph{($\F$-)orbit category} $\OFG$. The category $\OFG$ has the cosets $\G/H$ with $H \in \F$ as objects and $\G$-equivariant maps as morphisms. One can consider a covariant functor from $\OFG$ to the category of $\G$-spaces sending $\G/H$ to $\G/H$ considered as a discrete $\G$-space. The Bousfield-Kan homotopy colimit of this functor is a model for $E_\F(\G)$.

The above described model for $E_\F(\G)$ is too big and is usually infinite dimensional. Often one can construct small models for $E_\F(\G)$. An especially well studied special case is $\F=\mathcal{FIN}$, the family of finite subgroups of $\G$. The $\G$-space $E_{\mathcal{FIN}}(\G)$ plays an important role in geometric group theory and algebraic and topological $K$-theory (via the Farell--Jones Conjecture and Baum--Connes Conjecture) and is often finite dimensional and cocompact. For example when $\G=\Z \rtimes \Z/2 $, the infinite dihedral group, a model for $E_{\mathcal{FIN}}(\G)$ is the real line $\mathbb{R}$ with the sign and translation actions. This is a one dimensional cocompact model for $E_{\mathcal{FIN}}(\G)$, whereas the construction in the previous paragraph yields an infinite dimensional space.

The latter example shows  that it makes sense to try to find a minimal model for $E_\F(\G)$. The first step towards this is to find the minimal dimension such a model can have. This is the geometric dimension of the group $\G$ with respect to the family $\F$, denoted by $\gld_\F(\G)$. More precisely,
\[\gld_\F(\G) :=\min \{ \dim X  \mid  X \mbox{ is a model for } E_{\F}(\G) \},\]
where $\dim$ stand for the CW-dimension.
To compute $\gld_\F(\G)$ one needs some homological algebra. With this goal in mind, we recall the definition of cohomology of a category with coefficients in a functor. For details we refer to \cite[Chapter II, Section 9 and Chapter III, Section 17]{Lueck}. We will mostly need this in the case of the orbit category, however we will also need cohomology of certain posets and other related categories.

Let $\CC$ be a small category and let $F : \CC^{op} \to \Ab$ be a functor into the category of abelian groups (i.e. a contravariant functor on $\CC$). Such a functor is referred to as a \emph{$\CC$-module}. The category of $\CC$-modules and natural transformations is denoted by $\CC-\Mod$. This category is an abelian category and has enough projective objects. Projective objects are direct summands of sums of representable modules (often referred to as free modules) which have the form
\[\bigoplus_{\alpha}\Z[\CC(-, C_{\alpha})], \]
where $\alpha$ runs over some indexing set and the $C_\alpha$ are the representing objects in $\CC$. Let $\ul{\Z} \colon \CC^{op} \to \Ab$ denote the constant module which assigns the value $\Z$ to every object in $\CC$ and the identity homomorphism to every morphism in $\CC$. The \emph{$n$-th cohomology of $\CC$ with coefficients in a $\CC$-module $F : \CC^{op} \to \Ab$} is defined to be the Ext-group
\[H^n(\CC; F):=\Ext^n_{\CC-\Mod}(\ul{\Z}, F).\]
(We will below shorten the notation $\Ext^n_{\CC-\Mod}$ to $\Ext^n_{\CC}$.) There is a more direct way without using homological algebra to define $H^n(\CC; F)$, using a certain bar construction. But this will not be needed in this paper and we do not recall the construction.

Next we recall the following well known definition.

\begin{defn} \label{definition: cd of a category} Let $\CC$ be a small category. The \emph{cohomological dimension} of $\CC$, denoted by $\cld(\CC)$, is the projective dimension of the constant module $\ul{\Z} \colon \CC^{op} \to \Ab$. Equivalently, $\cld(\CC)$ is equal to the minimum of lengths of projective resolutions of $\ul{\Z}$. Yet another equivalent definition uses Ext-groups:
\[\cld(\CC)= \max\{n \; \vert \; \Ext_{\CC}^n(\ul{\Z}, F) \neq 0 \; \text{for some} \; F \}.\]
\end{defn}

Now given a family of subgroups $\F$ of $\G$, we can specialise the above definitions to the orbit category $\OFG$. Given a functor $M \colon \OFG^{op} \to \Ab$ (also referred to as a \emph{coefficient system}), one gets the cohomology groups 
\[
H^*(\OFG; M):=\Ext_{\OFG}^*(\ul{\Z}, M).
\]
We are now ready to recall one of the most important definitions for this paper:

\begin{defn} \label{definition: cd of a family} The\emph{ cohomological dimension of $\G$ with respect to the family $\F$} is the (possibly infinite) number $\cld(\OFG)$ and is denoted by $\cld_\F(\G)$.
\end{defn}

There is a close connection between $\cld_\F(\G)$ and $\gld_\F(\G)$. By \cite[Theorem 0.1]{Lueck-Meintrup} one has the following inequalities:
\[ \cld_\F(\G)\leq \gld_\F(\G)\leq \sup\{3,\cld_\F(\G)\}. \]
The \emph{Eilenberg-Ganea conjecture} states that if $\cld(\G)=2$ (and hence is torsion-free), then $\G$ has $2$-dimensional $K(\G,1)$. It turns out that the analog of this conjecture for general families is false. Brady, Leary and Nucinkis showed in \cite{BLN} that for certain right-angled Coxeter groups $W(L)$ and the family $\F=\mathcal{FIN}$, the generalised Eilenberg-Ganea conjecture fails. In other words, they prove that $\cld_{\mathcal{FIN}}(W(L))=2$ but $\gld_{\mathcal{FIN}}(W(L))=3$.

If the family $\F$ contains the full subgroup $\G$, then it is easy to see that one may take as $E_\F(\G)$ a one-point space with the trivial $\G$ action. Consequently,
\[
\G\in\F \implies \gld_\F(\G)=0 \implies \cld_\F(\G)=0.
\]
Conversely, if $\cld_\F(\G)=0$ then \cite[Lemma 2.5]{Sym} of Symonds implies that $\F$ has a unique maximal element which is self-normalizing, and it follows that $\G\in\F$ (see \cite[Proposition 3.20]{Fluch}). Hence 
\[
\cld_\F(\G)=0\iff \gld_\F(\G)=0 \iff \Gamma\in\F. 
\]

It is conjectured that for a general family $\F$ one has $\cld_\F(\G)=1$ if and only if $\gld_\F(\G)=1$. For the trivial family this is known and it is the celebrated Stallings-Swan theorem \cite{Sta, Swa}. For the family $\mathcal{FIN}$ this conjecture also holds and it is the theorem of Dunwoody \cite{Dunwoody}. This paper addresses this conjecture for $\G$ a finite group by showing that for any proper family $\F$ one always has $\cld_\F(\G)>1$. We also prove the conjecture for the family $\FG$ of sub-conjugates of $G$ in the semi-direct product $\pi\rtimes G$, when $\pi$ is a $G$-group with $G$ finite.  

Next, we recall the definition of Bredon cohomology $H^*_{\F}(X; M)$ which generalises the cohomology groups $H^*(\OFG; M)$. Let $\G$ be a discrete group, $\F$ a family of subgroups, $X$ a $\G$-CW complex and $M \colon \OFG^{op} \to \Ab$ a coefficient system. The space $X$ gives a natural chain complex $\ul{C}_*(X)$ of $\OFG$-modules defined by $\ul{C}_*(X)(\G/H)=C_*(X^H)$, where $C_*(-)$ denotes the cellular chain complex with integer coefficients. The \emph{Bredon cohomology} of $X$ with coefficients in $M$ is defined by
\[
H^i_\F(X; M):=H^i\big(\Hom_{\OFG}(\ul{C}_*(X),M)\big).
\]
Here $\Hom_{\OFG}(\ul{C}_*(X),M)$ is the cochain complex of natural transformations  from $\ul{C}_*(X)$ to $M$. Given a model for $E_\F(\G)$, it follows from \cite[Lemma 2.6]{Lueck-Meintrup} that the chain complex $\ul{C}_*(E_\F(\G))$ of $\OFG$-modules is a free resolution of $\ul{\Z}$. This implies that there is a natural isomorphism:
\[H^i_\F(E_\F(\G);M) =  H^i\big(\Hom_{\OFG}(\ul{C}_*(E_\F(\G)),M)\big) \cong \Ext_{\OFG}^{i}(\ul{\Z}, M)= H^i(\OFG; M).\]

We now recall Shapiro's Lemma for families, which plays a fundamental r\^{o}le in this paper. Let $\G$ be a group, $\F$ a family of subgroups of $\G$ and $H$ a subgroup of $\G$. Then
\[H \cap \F=\{K \in \F \; \vert \; K \leq H \} \]
is a family of subgroups of $H$. Pre-composing with the functor
\[\G \times_H - : \mathcal{O}_{H \cap \F}H \to   \OFG\]
which sends $H/K$ to $\G/K$ induces the restriction functor
\[ \res_H^\G \colon \OFG-\text{Mod} \to \mathcal{O}_{H \cap \F}H-\text{Mod}.\]
Since it is induced by pre-composition, we get that $\res_H^\G$ is exact, preserves direct sums and sends $\ul{\Z}$ to $\ul{\Z}$. Moreover, a straightforward calculation shows that the following double coset formula of Mackey type holds:
\[ \res^\G_H(\Z[\OFG(-, \G/K)] ) \cong \bigoplus_{g \in H \setminus \G/K} \Z[\mathcal{O}_{H \cap \F}H(-, H/H \cap {}^gK)].\]
This implies that $\res_H^\G$ preserves projective resolutions of $\ul{\Z}$.
The functor $\res_H^\G$ has a right adjoint
\[
\operatorname{coind}_H^\G:\mathcal{O}_{H \cap \F}H-\text{Mod}\to \OFG-\text{Mod}
\]
called co-induction (see \cite[Chapter 1, Section 10]{Fluch}, for example). One has the following generalization of the well-known Shapiro's Lemma.

\begin{lem} \label{lemma: Shapiro} Let $\F$ be a family of subgroups of a group $\G$, and let $H$ be a subgroup of $\G$. Then for all $M\in\mathcal{O}_{H \cap \F}H-\text{Mod}$ and $n\in \Z$ one has isomorphisms
\[
H^n(\mathcal{O}_{H \cap \F}H;M) \cong H^n(\OFG; \operatorname{coind}_H^\G(M)),
\]
which are natural in $M$. Consequently,
\[\cld_{H \cap \F}(H) \leq \cld_\F(\G).\]
\end{lem}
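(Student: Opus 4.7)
The plan is to exploit the adjunction $\res_H^\G \dashv \operatorname{coind}_H^\G$ recalled just before the statement, together with the three properties of $\res_H^\G$ already extracted from the double coset formula in the preceding paragraph: it is exact, it preserves projectives, and it sends $\ul{\Z}$ to $\ul{\Z}$. Given these, the lemma becomes a standard derived-functor computation.

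First I would fix a projective resolution $P_* \to \ul{\Z}$ in $\OFG\text{-Mod}$. By the properties above, applying $\res_H^\G$ termwise yields a projective resolution $\res_H^\G(P_*)\to\ul{\Z}$ in $\mathcal{O}_{H\cap\F}H\text{-Mod}$, so that $H^n(\mathcal{O}_{H\cap\F}H;M)$ is computed as the $n$-th cohomology of the cochain complex $\Hom_{\mathcal{O}_{H\cap\F}H}(\res_H^\G(P_*),M)$, while $H^n(\OFG;\operatorname{coind}_H^\G(M))$ is computed as the $n$-th cohomology of $\Hom_{\OFG}(P_*,\operatorname{coind}_H^\G(M))$.

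Second, the adjunction gives a natural isomorphism of cochain complexes
\[
\Hom_{\OFG}\bigl(P_*,\operatorname{coind}_H^\G(M)\bigr)\;\cong\;\Hom_{\mathcal{O}_{H\cap\F}H}\bigl(\res_H^\G(P_*),M\bigr),
\]
and passing to cohomology produces the desired isomorphism, with naturality in $M$ inherited from the naturality of the adjunction bijection. For the final inequality, if $\cld_\F(\G)=d$ is finite then $\Ext^n_{\OFG}(\ul{\Z},N)=0$ for all $n>d$ and all $N$; taking $N=\operatorname{coind}_H^\G(M)$ for an arbitrary $\mathcal{O}_{H\cap\F}H$-module $M$ and invoking the isomorphism just proved, we get $H^n(\mathcal{O}_{H\cap\F}H;M)=0$ for $n>d$, so $\cld_{H\cap\F}(H)\le d$. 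The case $\cld_\F(\G)=\infty$ is vacuous.

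There is no serious obstacle here: the argument is formal once the double coset formula has been used to guarantee that $\res_H^\G$ preserves projectives. The only point that might warrant a brief expansion is the passage from preservation of representables to preservation of arbitrary projectives, which is routine since projectives are retracts of direct sums of representables and $\res_H^\G$ commutes with both retracts and direct sums.
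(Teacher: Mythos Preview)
Your argument is correct and is precisely the standard derived-functor deduction the paper has in mind: the paragraph preceding the lemma sets up exactly the three ingredients you use (exactness of $\res_H^\G$, preservation of projectives via the double coset formula, and $\res_H^\G(\ul{\Z})=\ul{\Z}$) together with the adjunction $\res_H^\G\dashv\operatorname{coind}_H^\G$, and then states the lemma without further proof. Your write-up simply makes explicit the routine homological algebra that the paper leaves to the reader.
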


Shapiro's Lemma has a geometric counterpart, which is trivial to prove but nevertheless useful.

\begin{lem} \label{lemma: geometric Shapiro}
Let $\F$ be a family of subgroups of a group $\G$, and let $H$ be a subgroup of $\G$. Then any model for the classifying space $E_\F(\G)$ is also a model for $E_{H\cap\F}(H)$. Consequently,
\[
\gld_{H\cap\F}(H)\leq\gld_\F(\G).
\]
\end{lem}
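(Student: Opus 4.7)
The plan is to show directly, from the characterization of classifying spaces by fixed-point data, that if $X$ is any model for $E_\F(\G)$, then $X$ with its restricted $H$-action is a model for $E_{H\cap\F}(H)$. The bound on geometric dimension will then be immediate upon taking the infimum over all models.

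First I would observe that restricting the $\G$-action on $X$ to the subgroup $H$ turns $X$ into an $H$-CW complex with isotropy in $H\cap \F$. Indeed, each equivariant cell $\G/L \times D^n$ of $X$ with $L \in \F$ decomposes as an $H$-space into cells
\[
\bigsqcup_{g \in H\backslash \G/L} H/(H \cap {}^gL) \times D^n,
\]
and since $\F$ is closed under conjugation and under taking subgroups, each $H \cap {}^gL$ belongs to $H \cap \F$.

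Next I would verify the fixed-point characterization: for a subgroup $K \le H$, the $K$-fixed set $X^K$ does not depend on whether we view $K$ as a subgroup of $\G$ or of $H$. Since $X$ is a model for $E_\F(\G)$, the subspace $X^K$ is contractible when $K \in \F$ and empty otherwise. For $K \le H$, membership in $\F$ coincides with membership in $H\cap\F$, so $X^K$ is contractible for $K \in H\cap \F$ and empty for $K \notin H\cap\F$. This is precisely the defining property of a model for $E_{H\cap\F}(H)$, so any such $X$ serves as a model for $E_{H\cap\F}(H)$.

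Finally, since $\dim X$ is unchanged by restriction of the group action, $\gld_{H\cap\F}(H) \le \dim X$ for every model $X$ of $E_\F(\G)$; taking the infimum yields $\gld_{H\cap\F}(H) \le \gld_\F(\G)$. There is no real obstacle here; the only point requiring a moment's care is that the $\G$-CW structure restricts to an $H$-CW structure with isotropy in the correct family, which the double-coset decomposition handles cleanly.
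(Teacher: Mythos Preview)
Your proof is correct and follows essentially the same approach as the paper: restrict the action to $H$, then verify the fixed-point characterization of $E_{H\cap\F}(H)$ directly. The paper's version is terser---it simply asserts that the restriction is an $H$-CW complex without spelling out the double-coset decomposition of cells---but the argument is the same.
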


\begin{proof}
Let $X$ be a model for $E_\F(\G)$. By restriction of the action, $X$ becomes an $H$-CW complex. Given $K\in H\cap\F$, since $K\in \F$ we have that $X^K$ is (weakly) contractible. Given $K\leq H$ with $K\notin H\cap\F$, we must have $K\notin \F$ and therefore $X^K$ is empty.
\end{proof}

Recall that in the Introduction we have defined the equivariant cohomological and geometric dimensions
\[
\cld_G(\pi) :=\cld_\FG(\pi\rtimes G)\qquad\mbox{and}\qquad\gld_G(\pi):=\gld_\FG(\pi\rtimes G),
\]
where $\pi$ is a discrete $G$-group and $\FG$ is the family of sub-conjugates of $G$ in the semi-direct product $\pi\rtimes G$. We observe that $\pi\rtimes G\in \FG$ if and only if $\pi$ is trivial, and so 
\[
\cld_G(\pi)=0 \iff \gld_G(\pi)=0\iff \pi\mbox{ is trivial.}
\] 
Since $\pi\cap\FG=\{1\}$, Shapiro's Lemma gives
\[
\cld(\pi)\le\cld_G(\pi)\qquad\mbox{and}\qquad \gld(\pi)\leq\gld_G(\pi).
\]
Thus the equivariant dimensions are bounded below by the non-equivariant dimensions (and are infinite if $\pi$ contains torsion elements). 
When $G$ acts trivially on $\pi$, both inequalities become equalities. For in this case, $\FG$ is the family of subgroups of the normal subgroup $G\nsub \pi\rtimes G\cong\pi\times G$, and we have the following general result.
\begin{lem}\label{lem:trivialaction}
Let $N\nsub\G$ be a normal subgroup, and let $\mathcal{F}=\mathcal{F}\langle N\rangle$ be the family of subgroups of $\G$ which are contained in $N$. Then
\[
\cld_\mathcal{F}(\G)=\cld(\G/N)\qquad\mbox{and}\qquad \gld_\mathcal{F}(\G)=\gld(\G/N).
\]
\end{lem}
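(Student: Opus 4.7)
The plan is to prove both equalities by showing that $E(\Gamma/N)$, viewed as a $\Gamma$-space via the quotient $\Gamma\twoheadrightarrow\Gamma/N$, is a model for $E_{\mathcal F}(\Gamma)$. First I would observe that on this $\Gamma$-space the subgroup $N$ acts trivially while $\Gamma/N$ acts freely, so the $H$-fixed set is the whole (contractible) space when $H\le N$ and empty when $H\not\le N$. Hence $E(\Gamma/N)$ qualifies as a model for $E_{\mathcal F}(\Gamma)$, immediately yielding $\gld_{\mathcal F}(\Gamma)\le\gld(\Gamma/N)$.

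For the reverse geometric inequality I would start with any $\Gamma$-CW model $X$ for $E_{\mathcal F}(\Gamma)$. Every isotropy subgroup $H$ satisfies $H\le N$, and normality of $N$ gives $(\Gamma/H)/N\cong\Gamma/N$ as a $(\Gamma/N)$-set. Consequently $X/N$ is a free $(\Gamma/N)$-CW complex of dimension at most $\dim(X)$. Since $E_{\mathcal F}(\Gamma)$ is unique up to $\Gamma$-homotopy equivalence there is a $\Gamma$-homotopy equivalence $X\simeq_\Gamma E(\Gamma/N)$; passing to $N$-orbits produces a $(\Gamma/N)$-homotopy equivalence $X/N\simeq_{\Gamma/N}E(\Gamma/N)$, so $X/N$ is contractible and therefore a classifying space for $\Gamma/N$. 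This gives $\gld(\Gamma/N)\le\gld_{\mathcal F}(\Gamma)$.

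For the cohomological equality I would exploit the free resolution $\ul{C}_\ast(E(\Gamma/N))$ of $\ul{\Z}$ coming from the above model. Each free $(\Gamma/N)$-cell contributes a summand $\Z[\mathcal{O}_{\mathcal F}\Gamma(-,\Gamma/N)]$, and normality of $N$ forces $\mathcal{O}_{\mathcal F}\Gamma(\Gamma/H,\Gamma/N)=\Gamma/N$ for every $H\le N$, with $\operatorname{End}_{\mathcal{O}_{\mathcal F}\Gamma}(\Gamma/N)=\Gamma/N$. Combining Yoneda with these identifications yields, for any $\mathcal{O}_{\mathcal F}\Gamma$-module $M$, a natural isomorphism of cochain complexes
\[
\Hom_{\mathcal{O}_{\mathcal F}\Gamma}\bigl(\ul{C}_\ast(E(\Gamma/N)),M\bigr)\cong\Hom_{\Z[\Gamma/N]}\bigl(C_\ast(E(\Gamma/N)),M(\Gamma/N)\bigr),
\]
and hence $\Ext^n_{\mathcal{O}_{\mathcal F}\Gamma}(\ul{\Z},M)\cong H^n(\Gamma/N;M(\Gamma/N))$. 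The inequality $\cld_{\mathcal F}(\Gamma)\le\cld(\Gamma/N)$ is immediate, and the reverse follows by realising any given $(\Gamma/N)$-module $V$ as $M(\Gamma/N)$ for the pullback $\mathcal{O}_{\mathcal F}\Gamma$-module $M$ constant at $V$ with morphisms acting through their images in $\Gamma/N$. The main subtlety is this Yoneda/naturality identification, which hinges on normality of $N$ to guarantee that every orbit morphism into $\Gamma/N$ factors through $\Gamma\to\Gamma/N$; the remainder is routine bookkeeping.
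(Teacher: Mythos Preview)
Your argument is correct. The overall strategy matches the paper's: both use that $E(\Gamma/N)$, pulled back along $\Gamma\to\Gamma/N$, is a model for $E_{\mathcal F}(\Gamma)$, and both pass between $\mathcal O_{\mathcal F}\Gamma$-modules and $\Gamma/N$-modules via evaluation at $\Gamma/N$ and the ``constant at $V$'' construction (the paper names these $\res_F$ and $\operatorname{ind}_F$ and checks abstractly that they preserve projective resolutions of $\ul{\Z}$; you instead compute $\Ext$ against one explicit resolution via Yoneda---same content, different packaging).

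The one genuine difference is in the reverse geometric inequality. You take $X/N$ and argue it is contractible by first producing a $\Gamma$-homotopy equivalence $X\simeq_\Gamma E(\Gamma/N)$ and then passing to $N$-orbits. The paper instead takes $X^N$: since $N\in\mathcal F$, the fixed set $X^N$ is already contractible by the defining property of $E_{\mathcal F}(\Gamma)$, and $\Gamma/N$ acts freely on it because any larger isotropy would leave $\mathcal F$. This is slightly more direct---it avoids invoking uniqueness of classifying spaces and the quotient-preserves-homotopy-equivalence step---but your route is equally valid.
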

\begin{proof}
We have an inclusion functor $F:\G/N\to \OFG$, where $\G/N$ is regarded as a category with one object. Associated to $F$ are two functors 
\[
\res_F: \OFG-\text{Mod}\to \G/N-\text{Mod},\qquad M\mapsto M(\G/N)
\] 
and
\[
\operatorname{ind}_F: \G/N-\text{Mod}\to \OFG-\text{Mod},\qquad P\mapsto\left(\G/H\mapsto P^H=P\right),
\]
where in the second definition we first regard $P$ as a $\Gamma$-module via the projection $\G\to \G/N$ and then take fixed points. The reader can verify that both $\res_F$ and $\operatorname{ind}_F$ are exact, preserve direct sums and free modules, and send constant $\ul{\Z}$ to constant $\ul{\Z}$. This gives the first equality. 

It is easily verified that any  model for $E(\G/N)$, regarded as a $\G$-CW complex via the quotient map $\G\to \G/N$, is model for $E_\F(\G)$. Conversely, if $X$ is a model for $E_\F(\G)$ then $X^N$ is a contractible complex on which $\G/N$ acts freely, hence a model for $E(\G/N)$. This gives the second equality.
\end{proof}

\begin{rem}
An alternative definition of the equivariant cohomological dimension of a $G$-group can be given, using Inassaridze's definition of equivariant group cohomology \cite{In} as recalled in Remark \ref{rem:equivgroupcohom}. Given a $(\pi\rtimes G)$-module $N$, we obtain an $\mathcal{O}_\mathcal{G}(\pi\rtimes G)$-module $N^{(-)}$ by taking fixed sub-modules over each orbit. Define
\[
\cld_G^I(\pi)=\sup\{n\mid H^n(\mathcal{O}_\mathcal{G}(\pi\rtimes G);N^{(-)})\neq 0\mbox{
for some }(\pi\rtimes G)\mbox{-module }N\}.
\]
Co-induction along the functor $\mathcal{O}_{\{1\}}\pi\to \mathcal{O}_\mathcal{G}(\pi\rtimes G)$ sends a $\pi$-module $M$ to $\left(\operatorname{Hom}_\pi(\Z[\pi\rtimes G],M)\right)^{(-)}$. In other words, it is the composition of the usual co-induction from $\pi$-modules to $(\pi\rtimes G)$-modules with taking fixed sub-modules. An argument with Shapiro's Lemma therefore yields the first inequality below:
\[
\cld(\pi)\le\cld_G^I(\pi)\le\cld_G(\pi).
\]
We do not know whether the second inequality, which is immediate from the definitions, can be strict.
\end{rem}

\section{The Equivariant Eilenberg--Ganea Theorem}

In this section we give the proof of Theorem \ref{EqEG}. Recall that $\pi$ is a discrete $G$-group, where $G$ is a finite group. We denote the image of an element $\alpha\in \pi$ under $g\in G$ by ${}^g \alpha$. The semi-direct product $\pi\rtimes G$ has group multiplication given by $(\alpha,g)\cdot(\beta,h)=(\alpha{}^g\beta,gh)$.

As a discrete space, $\pi$ admits left actions of $\pi$ (induced by the group operation) and $G$ (given by the action). These actions are compatible, in the sense that for all $g\in G$ and $\alpha,\beta\in \pi$ we have ${}^g(\alpha \beta) = {}^g \alpha  {}^g \beta$, and so we get a left action of the semi-direct product $\pi\rtimes G$ on $\pi$, given by
\[
(\alpha,g)\cdot \alpha_0 = \alpha{}^g \alpha_0,\qquad g\in G, \quad \alpha,\alpha_0\in \pi.
\]

For $k\ge0$ let $E_k\pi$ denote the $(k+1)$-fold topological join of the discrete space $\pi$. Note that $E_k\pi$ is naturally a $k$-dimensional simplicial complex of the homotopy type of a wedge of $k$-spheres. The $(\GG)$-action on $\pi$ extends diagonally to an action on $E_k\pi$, making it into a $(\GG)$-CW complex. Taking the colimit of the obvious inclusions $E_k\pi\hookrightarrow E_{k+1}\pi = (E_k\pi)\ast\pi$, we obtain the infinite join $E\pi = \bigcup_{k\ge0} E_k\pi$ as an infinite dimensional $(\GG)$-CW complex.

\begin{lem}\label{Epimodel}
The space $E\pi$ is a model for $E_\FG(\pi\rtimes G)$.
\end{lem}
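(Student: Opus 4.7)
The plan is to verify that $E\pi$ satisfies the three defining properties of a classifying space for $\FG$: it is a $(\GG)$-CW complex, its isotropy groups lie in $\FG$, and the $H$-fixed subspace is contractible for every $H\in\FG$ and empty for every $H\notin\FG$.

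First I would identify the stabilizers of the action of $\GG$ on $\pi$. A direct computation gives
\[
\operatorname{Stab}(\alpha_0) = \bigl\{(\alpha_0\cdot{}^g\alpha_0^{-1},\,g):g\in G\bigr\} = (\alpha_0,1)\cdot (1\times G)\cdot (\alpha_0,1)^{-1},
\]
so every point stabilizer is conjugate to $1\times G\cong G$ and hence lies in $\FG$. The action of $\GG$ on each $E_k\pi$ is simplicial and preserves the labelling of the $k+1$ join factors, so no element permutes the vertices within a single simplex. Consequently $E\pi$ carries a $(\GG)$-CW structure in which every cell stabilizer is an intersection of point stabilizers of elements of $\pi$, and therefore again lies in $\FG$ (since $\FG$ is closed under taking subgroups).

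Second, I would appeal to the standard fact that taking $H$-fixed points commutes with joins,
\[
(X\ast Y)^H = X^H\ast Y^H
\]
(with the convention $\emptyset\ast Y=Y$), which follows immediately from the coordinate description of the join. Applied inductively to $E_k\pi = E_{k-1}\pi\ast\pi$ and then passing to the colimit, this yields $(E\pi)^H = E(\pi^H)$ for every subgroup $H\leq \GG$.

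Finally I would dispatch the two cases. If $H\notin\FG$, then $\pi^H$ must be empty, since any $\alpha_0\in\pi^H$ would force $H\leq\operatorname{Stab}(\alpha_0)\in\FG$, a contradiction; hence $(E\pi)^H=E(\emptyset)=\emptyset$. If $H\in\FG$, write $H\leq\gamma\cdot(1\times G)\cdot\gamma^{-1}$ for some $\gamma\in\GG$; then $H$ fixes $\gamma\cdot 1\in\pi$, so $\pi^H$ is non-empty and $(E\pi)^H=E(\pi^H)$ is an infinite join of a non-empty discrete space. Since the $k$-fold join of a non-empty space is $(k-2)$-connected, $E(\pi^H)$ is weakly contractible, and being a CW complex it is contractible. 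The only slightly delicate step is the commutation of fixed points with joins, which is routine once written in standard join coordinates.
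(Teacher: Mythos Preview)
Your proof is correct and follows essentially the same approach as the paper: both compute that the stabilizer of a point $\alpha_0\in\pi$ is the conjugate $(\alpha_0,1)(1\times G)(\alpha_0,1)^{-1}$, and both use the identification $(E\pi)^H\cong E(\pi^H)$ to deduce contractibility of fixed sets. The only organizational difference is that the paper verifies $(E\pi)^H\cong E(\pi^H)$ only for $H\leq G$ and then handles conjugates of such $H$ by translating, whereas you invoke the identification for \emph{all} $H\leq\GG$ at once, which lets you treat the cases $H\in\FG$ and $H\notin\FG$ uniformly via the dichotomy $\pi^H\neq\emptyset$ versus $\pi^H=\emptyset$; this is a minor streamlining, not a different argument.
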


\begin{proof}
We must show that the isotropy of $E\pi$ lies in $\FG$, and that for each $H\in \FG$ the fixed point set $(E\pi)^H$ is contractible.

We use a standard notation in which elements of the infinite join $E\pi$ are represented as (non-commutative) formal sums $\sum t_i\alpha_i$ with $t_i\in [0,1]$ almost all zero, $\sum t_i=1$ and $\alpha_i\in\pi$ for all $i$. Then the action is given by $(\alpha,g)\cdot \sum t_i\alpha_i = \sum t_i \alpha {}^g \alpha_i$. 

Let $H\leq \GG$ denote the stabiliser of $\sum t_i\alpha_i\in E\pi$. Choose an index $i$ such that $t_i>0$, and note that for all $(\alpha,g)\in H$ we have $\alpha{}^g \alpha_i=\alpha_i$. One verifies that
\[
(\alpha_i^{-1},1)(\alpha,g)(\alpha_i,1) = (\alpha_i^{-1}\alpha{}^g \alpha_i,g)= (1,g),
\]
so that $H$ is conjugate in $\GG$ to a subgroup of $G$. It follows that $E\pi$ has all isotropy groups in the family $\FG$.

Now suppose $H\leq G$. There is an evident homeomorphism $(E\pi)^H\cong E(\pi^H)$, hence $(E\pi)^H$ is contractible. Its translates $(\alpha,g)E(\pi^H) = (E\pi)^{(\alpha,g)H(\alpha,g)^{-1}}$ are therefore also contractible. Hence the fixed-point sets are contractible for all groups in $\mathcal{G}$, and $E\pi$ is a model for $E_\mathcal{G}(\pi\rtimes\G)$ as claimed.
\end{proof}

Let $B\pi=(E\pi)/\pi$, the orbit space of the free (left) $\pi$-action on $E\pi$. The $G$-action on $E\pi$ descends to a $G$-action on the quotient $B\pi$, and we have defined $\cat_G(\pi):=\cat_G(B\pi)$.

\begin{defn}[{\cite{CG,G}}]
Given a $G$-fibration $p:E\to B$, the equivariant sectional category, denoted $\secatG(p)$, is the minimal integer $k$ for which there exists a cover of $B$ by $G$-invariant open sets $U_0,\ldots , U_k$, on each of which $p$ admits a local $G$-section (i.e., a continuous $G$-map $s_i:U_i\to E$ such that $p\circ s_i=\operatorname{incl}:U_i\hookrightarrow B$).
\end{defn}

For the definition of $G$-fibration, see \cite[p.53]{tD}. Let $p:E\pi\to B\pi$ be the quotient map. Then $p$ is a $G$-fibration (since it is a locally trivial $(\pi,\alpha,G)$-bundle over a $G$-paracompact base; compare \cite[Chapter I, Exercise 7.5.5]{tD}).

\begin{prop}\label{prop:cat=secat}
The equivariant category $\catG(B\pi)$ is equal to $\secat_G(p)$, where $p:E\pi\to B\pi$ is the quotient map.
\end{prop}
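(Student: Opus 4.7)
The plan is to establish the two inequalities $\secat_G(p)\le\cat_G(B\pi)$ and $\cat_G(B\pi)\le\secat_G(p)$ separately, exploiting two consequences of \lemref{Epimodel}. First, $p\colon E\pi\to B\pi$ is a $G$-fibration with discrete fibre $\pi$, as the text already remarks. Second, the fixed set $(E\pi)^H$ is contractible for every subgroup $H\le G$, because every such $H$ lies in the family $\mathcal{G}$; viewing $E\pi$ as a $G$-CW complex by restriction of the $(\pi\rtimes G)$-action, the equivariant Whitehead theorem then shows that $E\pi$ is $G$-contractible to any chosen point $\tilde x_0\in(E\pi)^G\ne\emptyset$.

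For the inequality $\secat_G(p)\le\cat_G(B\pi)$, I would start from a $G$-invariant open set $U\subseteq B\pi$ whose inclusion $\iota\colon U\hookrightarrow B\pi$ is $G$-homotopic to a $G$-map $\phi\colon U\to G\cdot x_0\subseteq B\pi$, and construct a $G$-section of $p$ over $U$. The crux is to produce a $G$-lift $\tilde\phi\colon U\to E\pi$ of $\phi$; writing $H:=G_{x_0}$, this reduces to exhibiting a point $\tilde x_0\in p^{-1}(x_0)\cap(E\pi)^H$. Such a point exists because any preimage of $x_0$ has $(\pi\rtimes G)$-stabiliser that projects isomorphically onto $H$ (as $\pi$ acts freely on $E\pi$ and $G_{x_0}=H$) and belongs to $\mathcal{G}$; projecting to $G$ shows that this stabiliser is in fact $(\pi\rtimes G)$-conjugate to $H$ itself via an element of $\pi\le\pi\rtimes G$, and since $\pi$ acts trivially on $B\pi$, the corresponding $\pi$-translate of the original preimage is the desired $H$-fixed lift. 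Using the identification $U\cong G\times_H\phi^{-1}(x_0)$, the assignment $[g,u]\mapsto g\tilde x_0$ then defines the $G$-lift $\tilde\phi$. Finally, I would apply the $G$-homotopy lifting property of $p$ to the reverse $G$-homotopy from $\phi$ to $\iota$, starting at $\tilde\phi$, to obtain a $G$-section $s\colon U\to E\pi$ with $p\circ s=\iota$.

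For the converse inequality $\cat_G(B\pi)\le\secat_G(p)$, let $s\colon U\to E\pi$ be a $G$-section of $p$ over a $G$-invariant open $U$. The $G$-contractibility of $E\pi$ provides a $G$-homotopy from $s$ to the constant map with value some $\tilde x_0\in(E\pi)^G$. Post-composing with $p$ yields a $G$-homotopy in $B\pi$ from $\iota=p\circ s$ to the constant map at $x_0:=p(\tilde x_0)\in(B\pi)^G$, whose image is the single (fixed) orbit $\{x_0\}$; this verifies the categorical condition for $U$.

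The main technical obstacle will be the construction of the $H$-fixed lift $\tilde x_0$ in the first direction: this is the only step that genuinely invokes \lemref{Epimodel} rather than mere $G$-contractibility of $E\pi$, and it uses crucially that $\mathcal{G}$ is defined via $(\pi\rtimes G)$-conjugation rather than abstract subgroup isomorphism. The rest of the argument is a direct equivariant adaptation of the classical equality $\cat(B\pi)=\secat(E\pi\to B\pi)$ for ordinary discrete groups.
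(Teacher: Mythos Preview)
Your argument is correct and close in spirit to the paper's proof, though organised differently. The paper invokes \cite[Corollary~4.7]{CG} as a black box: for any $G$-fibration $q\colon E\to B$ with (i) $E$ $G$-categorical and (ii) $q(E^H)=B^H$ for all $H\le G$, one has $\secat_G(q)=\cat_G(B)$. It then verifies (i) using $G$-contractibility of $E\pi$, and (ii) by an explicit formal-sum computation showing that every $H$-fixed point of $B\pi$ lifts to an $H$-fixed point of $E\pi$.

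Your two inequalities correspond exactly to these two hypotheses. Your second direction ($\cat_G\le\secat_G$) uses only the $G$-contractibility of $E\pi$, which is precisely condition~(i). Your first direction hinges on producing an $H$-fixed lift of $x_0$, which is precisely condition~(ii); you obtain it by the stabiliser argument (the stabiliser of any lift lies in $\mathcal{G}$, projects onto $H$, and is therefore $\pi$-conjugate to $H$), whereas the paper does the equivalent computation directly in join coordinates. So you have, in effect, reproved the relevant instance of the Colman--Grant result rather than quoting it. Your remark that the fixed-lift step is the only place where the full strength of \lemref{Epimodel} (and the definition of $\mathcal{G}$ via conjugation) is needed is exactly the point of condition~(ii).
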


\begin{proof}
It is shown in \cite[Corollary 4.7]{CG} that if $q:E\to B$ is a $G$-fibration such that:
\begin{enumerate}[(i)]
\item $E$ is $G$-categorical (i.e., the identity map on $E$ is $G$-homotopic to a map with values in a single orbit); and
\item $q(E^H)=B^H$ for all subgroups $H\le G$;
\end{enumerate}
then $\secatG(q)=\catG(B)$. We will show that conditions (i) and (ii) hold for $q=p:E\pi\to B\pi$.

We have shown in Lemma \ref{Epimodel} that $E\pi$ is a model for $E_\mathcal{G}(\pi\rtimes G)$. It follows that $E\pi$ is also a model for $E_{G\cap \FG}(G)$ (see Lemma \ref{lemma: geometric Shapiro}). However, $G\cap \FG=\mathcal{ALL}$ is the family of \emph{all} subgroups of $G$, and so $E\pi$ is $G$-homotopy equivalent to a point, and in particular is $G$-categorical. Hence (i) is satisfied.

Next, let $H\le G$ be any subgroup. Clearly $p((E\pi)^H)\subseteq (B\pi)^H$, and we must show surjectivity. So let $x\in (B\pi)^H$. Since $p$ is surjective, there exists $y\in p^{-1}(x)\subseteq E\pi$. Although $y$ need not be fixed by $H$, for all $g\in H$ there exists a unique (since $\pi$ acts freely on $E\pi$) element $\alpha_g\in \pi$ such that $\alpha_g {}^g y = y$. Representing $y$ as a formal sum $\sum t_i \alpha_i$, we find that for every $i$ such that $t_i>0$, and for all $g\in H$, the equation
\begin{equation}\label{ggamma}
\alpha_g {}^g \alpha_i = \alpha_i\qquad\mbox{equivalently,}\qquad {}^g \alpha_i = \alpha_g^{-1} \alpha_i,
\end{equation}
holds. Let $j$ be any specific index such that $t_{j}>0$. We claim that $\alpha_{j}^{-1}y\in (E\pi)^H$; as $p(\alpha_{j}^{-1}y)=x$, this verifies condition (ii). This is a straightforward calculation using Equation (\ref{ggamma}): for all $g\in H$,
\begin{align*}
{}^g \left(\sum t_i \alpha_{j}^{-1}\alpha_i\right) & = \sum t_i {}^g\left(\alpha_{j}^{-1} \alpha_i\right) \\
                                                & = \sum t_i {}^g\left( \alpha_{j}^{-1}\right) {}^g \left(\alpha_i\right) \\
                                                & = \sum t_i \left(\alpha_j^{-1} \alpha_g\right)\left( \alpha_g^{-1} \alpha_i\right)\\
                                                & = \sum t_i \alpha_j^{-1} \alpha_i.
\end{align*}
\end{proof}

\begin{cor}\label{cor:equivmap}
The equivariant category $\catG(B\pi)$ equals the minimal integer $k$ such that there exists a $(\GG)$-equivariant map $E\pi\to E_k\pi$.
\end{cor}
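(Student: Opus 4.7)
The plan is to combine Proposition \ref{prop:cat=secat} with an equivariant version of the classical Schwarz characterisation of sectional category. Specifically, after replacing $\catG(B\pi)$ by $\secatG(p)$, the task reduces to showing that $\secatG(p)\le k$ if and only if there exists a $(\GG)$-equivariant map $E\pi\to E_k\pi$, with $E_k\pi$ playing the role of the $(k+1)$-fold fibrewise join of the principal $\pi$-bundle $p$ with itself.

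For the direction $\secatG(p)\le k \Longrightarrow$ existence of an equivariant map, I would take a $G$-invariant open cover $\{U_0,\ldots,U_k\}$ of $B\pi$ with $G$-sections $s_i\co U_i\to E\pi$ of $p$, and pick a $G$-invariant partition of unity $\{\phi_i\}$ subordinate to it---obtained by averaging an ordinary partition of unity over the finite group $G$. Each $s_i$ determines a continuous ``comparison'' map $\tilde s_i\co p^{-1}(U_i)\to \pi$ characterised by $y=\tilde s_i(y)\cdot s_i(p(y))$, and I would then define
\[ F(y):=\sum_{i=0}^k \phi_i(p(y))\,\tilde s_i(y)\in E_k\pi. \]
The main verification is $(\GG)$-equivariance of $F$, which follows readily from the $G$-equivariance of each $s_i$, the $G$-invariance of each $\phi_i$, and the explicit action formula $(1,g)\cdot\sum t_i\alpha_i=\sum t_i\,{}^g\alpha_i$ on $E_k\pi$.

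For the converse, given a $(\GG)$-equivariant $F\co E\pi\to E_k\pi$, I would exploit the standard open cover $\{V_j\}$ of $E_k\pi$ by the loci where the $j$-th barycentric coordinate is positive, together with the continuous coordinate maps $\rho_j\co V_j\to\pi$ sending $\sum t_i\alpha_i\mapsto \alpha_j$. Writing $\bar F\co B\pi\to B_k\pi$ for the induced map on orbit spaces, the preimages $U_j:=\bar F^{-1}(V_j/\pi)$ form a $G$-invariant open cover of $B\pi$, and I would build $G$-sections $s_j\co U_j\to E\pi$ of $p$ by the formula
\[ s_j(p(y)):=\rho_j(F(y))^{-1}\cdot y. \]
Independence of the choice of $y\in p^{-1}(x)$ follows from the left $\pi$-equivariance of $F$ and $\rho_j$, while the $G$-equivariance of $s_j$ follows from the relation $\rho_j((1,g)z)={}^g\rho_j(z)$.

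The only mildly subtle step is the existence of the $G$-invariant partition of unity in the first direction; because $G$ is finite and $B\pi$ is paracompact, this is routine by averaging, and the argument otherwise proceeds by direct equivariant bookkeeping of the classical Schwarz construction, with no further obstacle.
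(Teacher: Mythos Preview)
Your argument is correct. Both directions go through as you outline: the averaging trick produces a $G$-invariant partition of unity subordinate to the given cover (since each $U_i$ is already $G$-invariant, supports are preserved), and your equivariance checks for $F$ and for the sections $s_j$ are exactly right once one uses the relation $(1,g)\cdot(\alpha\cdot z)={}^g\alpha\cdot((1,g)\cdot z)$ on $E\pi$.

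The paper takes a different, more structural route. Rather than building the map $F$ (or the sections $s_j$) by hand from a partition of unity, it invokes the general characterisation $\secatG(p)\le k \iff$ the $(k+1)$-fold fibred join $p_k$ admits a global $G$-section, then identifies $p$ with the associated bundle $E\pi\times_\pi\pi\to B\pi$ via an explicit $G$-homeomorphism, and uses Schwarz's identification of the fibred join with the associated bundle $Q_k:E\pi\times_\pi E_k\pi\to B\pi$. The final step is then the standard dictionary between $G$-sections of $Q_k$ and $(\GG)$-maps $E\pi\to E_k\pi$, via $\sigma[e]=[e,\psi(e)]$. So the paper packages your two explicit constructions into a single quoted result about fibred joins, at the cost of setting up the associated-bundle identification. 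Your approach is more elementary and self-contained (no external equivariant Schwarz lemma needed), while the paper's approach makes clearer the conceptual reason the join $E_k\pi$ appears and would generalise more readily to other principal bundles.
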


\begin{proof}
By Proposition \ref{prop:cat=secat} we have $\catG(B\pi)=\secatG(p)$, where $p:E\pi\to B\pi$ is the quotient map. We use the characterization of $G$-sectional category in terms of $G$-sections of fibred joins, observed in \cite[Proposition 3.4]{G}. In particular, since $p : E\pi \to B\pi$ is a $G$-fibration over a paracompact base space, $\secatG(p) \leq k$ if and only if the $(k+1)$-fold fibred join $p_k : J^k_{B\pi}(E\pi)\to B\pi$ admits a (global) $G$-section.

The $G$-fibration $p:E\pi\to B\pi$ can be identified with the associated fibration $q:E\pi\times_\pi \pi\to B\pi$ with fibre $\pi$, as follows. Sticking with left actions, the total space $E\pi\times_\pi \pi$ is  the orbit space of $E\pi\times \pi$ under the diagonal $\pi$-action given by $$\left(\alpha, \left(\sum t_i \alpha_i, \beta\right)\right) \mapsto \left(\sum t_i\alpha\alpha_i , \alpha \beta\right).$$ There is a $G$-homeomorphism $\phi: E\pi\times_\pi \pi\to E\pi$ given by $\left[\sum t_i \alpha_i,\beta\right]\mapsto \sum t_i \beta^{-1}\alpha_i$, where the action of $G$ on $E\pi\times_\pi \pi$ is given by $(g, \left[\sum t_i \alpha_i,\beta\right])\mapsto \left[\sum t_i {}^g \alpha_i ,{}^g \beta\right]$. This action preserves the fibres of the projections to $B\pi$.

It follows that $\secatG(p)\leq k$ if and only if the $(k+1)$-fold fibred join $q_k$ of $q: E\pi\times_\pi \pi\to B\pi$ admits a $G$-section. By Schwarz \cite[Proposition 1]{Schwarz}, $q_k$ can be identified with the associated fibration $Q_k: E\pi\times_\pi E_k\pi\to B\pi$ with fibre the $(k+1)$-fold join $E_k\pi$. Here the $G$-action on the total space is given by
\[
\left(g, \left[\sum t_i \alpha_i, s_0\beta_0 + \cdots + s_k \beta_k\right]\right)\mapsto \left[\sum t_i {}^g \alpha_i, s_0 {}^g \beta_0 + \cdots + s_k {}^g \beta_k\right].
\]
Sections of $Q_k$ correspond to $\pi$-maps $E\pi\to E_k\pi$, while $G$-sections of $Q_k$ correspond to $(\GG)$-maps $E\pi\to E_k\pi$. More explicitly, given a $(\GG)$-map $\psi: E\pi\to E_k\pi$, we obtain a $G$-section $\sigma:B\pi\to E\pi\times_\pi E_k\pi$ of $Q_k$ by setting $\sigma[e]=[e,\psi(e)]$ for $e\in E\pi$. Conversely, given a $G$-section $\sigma:B\pi\to E\pi\times_\pi E_k\pi$ we define $\psi:E\pi\to E_k\pi$ using the formula $\sigma[e]=[e,\psi(e)]$. Checking that $\psi$ is a $(\GG)$-map is straightforward.
\end{proof}

\begin{cor}\label{cor:catGhtpyretract}
The equivariant category $\catG(B\pi)$ equals the minimal integer $k$ such that $E\pi$ is a $(\GG)$-homotopy retract of a $(\GG)$-CW complex of dimension $k$.
\end{cor}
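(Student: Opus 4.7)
The plan is to deduce the corollary from \corref{cor:equivmap}, which identifies $\catG(B\pi)$ with the minimal $k$ for which there exists a $(\pi\rtimes G)$-equivariant map $E\pi\to E_k\pi$, and then to prove two matching inequalities.

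For one direction, if $\catG(B\pi)\le k$ then \corref{cor:equivmap} supplies a $(\pi\rtimes G)$-map $f\colon E\pi\to E_k\pi$; I would take $X=E_k\pi$, $i=f$, and $r\colon E_k\pi\hookrightarrow E\pi$ the skeletal inclusion. Both $r\circ f$ and $\mathrm{id}_{E\pi}$ are $(\pi\rtimes G)$-self-maps of $E\pi$. Since by \lemref{Epimodel} the space $E\pi$ is a model for $E_\FG(\pi\rtimes G)$ and has all isotropy in $\FG$, the universal property forces $r\circ f\simeq_{\pi\rtimes G}\mathrm{id}_{E\pi}$, exhibiting $E\pi$ as a $(\pi\rtimes G)$-homotopy retract of the $k$-dimensional $(\pi\rtimes G)$-CW complex $E_k\pi$.

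For the converse, suppose $i\colon E\pi\to X$ and $r\colon X\to E\pi$ are $(\pi\rtimes G)$-maps with $X$ a $(\pi\rtimes G)$-CW complex of dimension $k$ and $r\circ i\simeq_{\pi\rtimes G}\mathrm{id}$. First I would observe that every isotropy group of $X$ must lie in $\FG$: if $x\in X^H$ with $H\notin\FG$ then $r(x)\in (E\pi)^H=\emptyset$, a contradiction. Then I would construct a $(\pi\rtimes G)$-map $\phi\colon X\to E_k\pi$ by equivariant obstruction theory, inducting over the skeleta of $X$. On a $0$-cell of type $(\pi\rtimes G)/H$ with $H\in\FG$, the map is defined by choosing any $H$-fixed point of $E_k\pi^H=E_k(\pi^H)$, which is nonempty because $\pi^H$ contains the identity. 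The obstruction to extending $\phi$ across an equivariant $n$-cell of orbit type $(\pi\rtimes G)/H\times D^n$ lies in $\pi_{n-1}\bigl(E_k\pi^H\bigr)$; but $E_k\pi^H=E_k(\pi^H)$, being the $(k+1)$-fold join of a nonempty discrete set, is $(k-1)$-connected, so every such obstruction vanishes for $n\le k=\dim X$. Composing yields a $(\pi\rtimes G)$-map $\phi\circ i\colon E\pi\to E_k\pi$, whence $\catG(B\pi)\le k$ by \corref{cor:equivmap}.

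The main technical point is the equivariant obstruction argument that builds $\phi\colon X\to E_k\pi$, which relies essentially on the $(k-1)$-connectivity of the fixed spaces $E_k\pi^H$ for $H\in\FG$; the remaining bookkeeping is a formal consequence of the universal property of $E_\FG(\pi\rtimes G)$ and \lemref{Epimodel}.
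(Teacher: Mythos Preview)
Your proof is correct and follows essentially the same approach as the paper. The only difference is cosmetic: in the converse direction the paper packages the extension argument as an appeal to the equivariant Whitehead theorem (showing that $[L,E_k\pi]_{\GG}\to[L,E\pi]_{\GG}$ is surjective because the inclusion $E_k\pi\hookrightarrow E\pi$ is a $\nu$-equivalence), whereas you unwind that argument and build $\phi\colon X\to E_k\pi$ cell-by-cell using the $(k-1)$-connectivity of the fixed sets $(E_k\pi)^H$; one small point to tidy is that $(E_k\pi)^H=E_k(\pi^H)$ is literally true only for $H\le G$, but for a general $H\in\FG$ the fixed set is a translate of some $E_k(\pi^K)$ with $K\le G$, so the connectivity claim still holds.
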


\begin{proof}
Suppose $\catG(B\pi)\leq k$. By Corollary \ref{cor:equivmap}, there exists a $(\GG)$-map $\psi:E\pi\to E_k\pi$. By Lemma \ref{Epimodel}, the space $E\pi$ is a classifying space $E_\FG(\GG)$ for the family $\FG$. Since $E_k\pi$ is a sub-complex of $E\pi$, it too has isotropy in $\FG$, and therefore there is a classifying $(\GG)$-map $\phi:E_k\pi\to E\pi$. Since $(\GG)$-maps $E\pi\to E\pi$ are unique up to $(\GG)$-homotopy, the composition $\phi\circ \psi$ is $(\GG)$-homotopic to the identity. Therefore $E\pi$ is a $(\GG)$-homotopy retract of $E_k\pi$, which has dimension $k$.

Conversely, suppose we have a factorisation
\[
\xymatrix{
E\pi \ar[r]^\psi & L \ar[r]^\phi & E\pi
}
\]
of the identity map up to $(\GG)$-homotopy, where $L$ is a $(\GG)$-CW complex of dimension $k$. Observe that this implies that $L^H=\varnothing$ for subgroups $H\le\GG$ not in $\FG$. Let $f:E_k\pi\to E\pi$ denote the inclusion. We use the equivariant Whitehead Theorem (see \cite[Theorem II.2.6]{tD} or \cite[Theorem I.3.2]{May}, for example) to show that the map $L\to E\pi$ factors through $f$ up to $(\GG)$-homotopy. For this let $\nu:\operatorname{Con}(\GG)\to \Z$ be the function on conjugacy classes of subgroups of $\GG$ given by
\[
\nu(H)=\begin{cases} k & \mbox{if }H\in \FG, \\ -1 & \mbox{if }H\notin\FG, \end{cases}
\]
and observe that $L$ has dimension at most $\nu$ and that $f$ is a $\nu$-equivalence. Therefore
\[
f_\ast:[L,E_k\pi]_{\GG}\to [L,E\pi]_{\GG}
\]
is surjective. We therefore have a $(\GG)$-map $E\pi\to L\to E_k\pi$, and by Corollary \ref{cor:equivmap} this implies that $\catG(B\pi)\leq k$.
\end{proof}

We are now in a position to prove Theorem \ref{EqEG}, restated here for convenience.

\begin{thm}[Equivariant Eilenberg--Ganea Theorem]\label{EqEGrestatement}
Let $\pi$ be a discrete $G$-group, where $G$ is finite. Then the chain of inequalities
\[
\cld_G(\pi)\leq \cat_G(\pi)\leq \gld_G(\pi)\leq \sup\{3, \cld_G(\pi)\}
\]
is satisfied. Furthermore, if $\cld_G(\pi)=2$ then $\cat_G(\pi)=2$.
\end{thm}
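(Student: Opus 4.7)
The plan is to split the statement into four inequalities and reduce each to a tool already in place: Corollary \ref{cor:catGhtpyretract} characterizes $\cat_G(\pi)$ as the minimal dimension of a $(\pi\rtimes G)$-CW complex onto which $E\pi$ is a $(\pi\rtimes G)$-homotopy retract, and the Lück--Meintrup theorem \cite{Lueck-Meintrup} directly supplies $\gld_G(\pi) \leq \sup\{3, \cld_G(\pi)\}$ when applied to $(\pi\rtimes G, \FG)$.

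For $\cat_G(\pi) \leq \gld_G(\pi)$, I would pick a model $X$ for $E_\FG(\pi\rtimes G)$ of dimension $k = \gld_G(\pi)$; Lemma \ref{Epimodel} and uniqueness of classifying spaces for a family show that $X$ and $E\pi$ are $(\pi\rtimes G)$-homotopy equivalent, so $E\pi$ is in particular a retract of $X$, and Corollary \ref{cor:catGhtpyretract} closes the gap. For $\cld_G(\pi) \leq \cat_G(\pi)$, given $\cat_G(\pi) \leq k$, Corollary \ref{cor:catGhtpyretract} yields $(\pi\rtimes G)$-maps $E\pi \xrightarrow{i} L \xrightarrow{r} E\pi$ with $r\circ i \simeq \mathrm{id}$ and $\dim L = k$. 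Applying $(\pi\rtimes G)$-homotopy invariant Bredon cohomology renders $r^{*}: H^n_\FG(E\pi;M) \to H^n_\FG(L;M)$ split injective for every coefficient system $M$; since $L$ is $k$-dimensional, the cochain complex $\Hom_{\OFGG}(\underline{C}_*(L),M)$ is concentrated in degrees $\leq k$, so the target vanishes for $n > k$, whence $\Ext^n_{\OFGG}(\underline{\Z}, M) = H^n_\FG(E\pi;M) = 0$ for all such $n$ and $M$, giving $\cld_G(\pi) \leq k$.

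For the jump $\cld_G(\pi) = 2 \Longrightarrow \cat_G(\pi) = 2$, the inequality $\cat_G(\pi) \geq 2$ is already in hand. For the reverse, I would adapt the classical Eilenberg--Ganea trick to the equivariant setting: start from a length-two free $\OFGG$-resolution $P_2 \to P_1 \to P_0 \to \underline{\Z}$ and build a 2-dimensional $(\pi\rtimes G)$-CW complex $Y$ with cells of orbit types $(\pi\rtimes G)/H$ $(H\in\FG)$ realizing this resolution and with all fixed-point sets $Y^H$ simply connected. Schanuel's lemma in the category of $\OFGG$-modules, together with the hypothesis $\cld_G(\pi) = 2$, forces the second syzygy $\pi_2^{(-)}(Y) \cong \ker(P_1 \to P_0)$ to be a projective $\OFGG$-module. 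After wedging on trivially-attached equivariant 2-spheres to promote this projective module to a free one and then attaching 3-cells along a free basis, one obtains a 3-dimensional model $X$ for $E_\FG(\pi\rtimes G)$; projectivity of the $\OFGG$-module killed means the inclusion $Y' := X^{(2)} \hookrightarrow X$ admits a $(\pi\rtimes G)$-equivariant homotopy right inverse. Composing with the $(\pi\rtimes G)$-equivalence $X \simeq E\pi$ realizes $E\pi$ as a $(\pi\rtimes G)$-homotopy retract of the 2-dimensional $Y'$, and Corollary \ref{cor:catGhtpyretract} delivers $\cat_G(\pi) \leq 2$.

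The hard step is this last one. Realizing an algebraic $\OFGG$-resolution by a $(\pi\rtimes G)$-CW complex with cells of the correct orbit types in $\FG$ requires careful bookkeeping of the isotropy and fixed-point fundamental groups; verifying that the second syzygy is actually projective (not merely stably projective) proceeds via an equivariant Schanuel argument specific to the orbit category; and showing that attaching cells along a projective $\OFGG$-module splits up to $(\pi\rtimes G)$-equivariant homotopy reduces to an equivariant Whitehead-type argument on the cellular level. Finiteness of $G$ enters in guaranteeing that $\OFGG$ has enough projective modules coming from orbit-type cells and that these algebraic-to-geometric translations function as in the classical case.
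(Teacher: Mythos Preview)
Your treatment of the three inequalities $\cld_G(\pi)\le\cat_G(\pi)\le\gld_G(\pi)\le\sup\{3,\cld_G(\pi)\}$ is correct and matches the paper's proof exactly: L\"uck--Meintrup for the outer bound, uniqueness of classifying spaces plus Corollary~\ref{cor:catGhtpyretract} for $\cat_G\le\gld_G$, and the Bredon-cohomology retract argument for $\cld_G\le\cat_G$.

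For the implication $\cld_G(\pi)=2\Rightarrow\cat_G(\pi)\le 2$, the paper takes a much shorter route than you propose. It sets $L=E\pi^{(2)}$, the $2$-skeleton of $E\pi$, and uses equivariant obstruction theory directly: since each $L^H$ (for $H\in\FG$) is the $2$-skeleton of the contractible $(E\pi)^H$, it is simply connected, so the obstructions to a $(\GG)$-map $E\pi\to L$ lie in $H^{n+1}_\FG(E\pi;\ul\pi_n(L))\cong H^{n+1}(\OFGG;\ul\pi_n(L))$, which vanish for $n\le1$ by simple connectivity and for $n\ge2$ by $\cld_G(\pi)=2$. Composing with $L\hookrightarrow E\pi$ and invoking Corollary~\ref{cor:catGhtpyretract} finishes the proof. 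No Schanuel argument, no auxiliary $3$-complex, no projectivity of $\pi_2$ is needed.

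Your detour has genuine problems. First, ``realizing'' a length-two free resolution $P_2\to P_1\to P_0\to\ul\Z$ by a $2$-complex $Y$ \emph{with simply connected fixed-point sets} is exactly what fails in the Eilenberg--Ganea situation: if you could do it, $Y$ would already be a $2$-dimensional model for $E_\FG(\GG)$, forcing $\gld_G(\pi)\le2$. Geometric realization only makes the $Y^H$ acyclic; killing $\pi_1(Y^H)$ requires extra $2$-cells, after which the cellular chain complex is no longer the given resolution. Second, your formula $\pi_2^{(-)}(Y)\cong\ker(P_1\to P_0)$ is wrong; for a simply connected $2$-complex with cellular chains $C_*$, Hurewicz gives $\pi_2^{(-)}(Y)=\ker(C_2\to C_1)$, and this is not identified with $\ker(P_1\to P_0)$ by Schanuel. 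Third, and most importantly, even after fixing these issues your final step---producing a $(\GG)$-map $X\to Y'$---still requires precisely the obstruction-theoretic vanishing $H^{3}(\OFGG;\ul\pi_2(Y'))=0$ coming from $\cld_G(\pi)=2$; the projectivity of $\ul\pi_2(Y')$ does not by itself split the inclusion $Y'\hookrightarrow X$. So your route both introduces a gap and ultimately rests on the same input the paper uses in one line.
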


\begin{proof}
As noted above, the inequalities $\cld_G(\pi)\le \gld_G(\pi)\leq \sup\{3, \cld_G(\pi)\}$ follow from the more general \cite[Theorem 0.1]{Lueck-Meintrup} applied to the family $\FG$.

Suppose $\gld_G(\pi)\le k$, meaning there is a $k$-dimensional $(\GG)$-CW complex $L$ which is a model for $E_\FG(\GG)$. By uniqueness of classifying spaces and Lemma \ref{Epimodel}, there is a $(\GG)$-homotopy equivalence $E\pi \simeq L$. In particular $E\pi$ is a $(\GG)$-homotopy retract of $L$, and $\catG(\pi)=\catG(B\pi) \leq k$ by Corollary \ref{cor:catGhtpyretract}. Hence $\cat_G(\pi)\leq\gld_G(\pi)$.

Now suppose that $\catG(\pi)\leq k$. By Corollary \ref{cor:catGhtpyretract} the identity map on $E\pi$ factors up to $(\GG)$-homotopy through a $(\GG)$-CW complex $L$ of dimension $k$. Then for any $i>k$ and $\OFGG$-module $M$, the identity homomorphism on the Bredon cohomology group $H^i_\FG(E\pi;M)\cong H^i(\OFGG;M)$ factors through $H^i_\FG(L;M)=0$. Hence $\cld_G(\pi)\leq k$.

To prove the final statement, we invoke equivariant obstruction theory. First note that in order to prove that $\cat_G(\pi)\le 2$, it is sufficient to show the existence of a $(\GG)$-equivariant map $E\pi\to L$, where $L:=E\pi^{(2)}$ denotes the $2$-skeleton of $E\pi$. For given such a map, composing with the inclusion $L\hookrightarrow E\pi$ gives a map $E\pi\to E\pi$, which by uniqueness of classifying maps must be $(\GG)$-homotopic to the identity. Hence $E\pi$ is a $(\GG)$-homotopy retract of the $2$-dimensional complex $L$, and we invoke Corollary \ref{cor:catGhtpyretract}.

Note that for all $H\in\FG$, the fixed subcomplex $L^H$ equals the $2$-skeleton of the contractible space $(E\pi)^H$, hence is simply-connected, and in particular $n$-simple for all $n$. The obstructions to the existence of an equivariant map $E\pi\to L$ lie in Bredon cohomology groups
$$
H^{n+1}_\FG(E\pi;\ul{\pi}_n(L))\cong H^{n+1}(\OFGG;\ul{\pi}_n(L)),
$$
where $\ul{\pi}_n(L):\OFGG\to \Ab$ is defined by $\ul{\pi}_n(L)(\GG/H)=\pi_n(L^H)$ for all $H\in\FG$
(compare \cite[Section 5]{May}, \cite[Theorem 2.6]{Lueck-Oliver}). When $\cld_G(\pi)=2$, these groups are trivial for $n\ge2$, and they are trivial for $n\le 1$ by the simple-connectivity of $L^H$ alluded to above.
\end{proof}

\section{The Equivariant Stallings--Swan Theorem}

In this section we will prove Theorem \ref{EqSS} from the introduction, restated below as Theorem \ref{EqSSrestatement}. The proof relies on Theorem \ref{mainalg} (proved in the next section), as well as the concept of non-abelian cohomology to relate the family $\FG$ of subgroups of $\GG$ to the family $\FIN$ of finite subgroups. We use the standard notations $\ul{\cld}(\G):=\cld_\FIN(\G)$ and $\ul{\gld}(\G):=\gld_\FIN(\G)$. 

Recall that a \emph{$1$-cocycle} $\varphi:G\to \pi$ is a function satisfying $\varphi(gh)=\varphi(g){}^g \varphi(h)$ for all $g,h\in G$. Define an equivalence relation on $1$-cocycles by declaring $\varphi_1\sim \varphi_2$ if there exists $\alpha\in \pi$ such that $\varphi_1(g) =\alpha^{-1} \varphi_2(g) {}^g \alpha$ for all $g\in G$. The set of equivalence classes is denoted by $H^1(G;\pi)$, and called the \emph{first non-abelian cohomology of $G$ with coefficients in $\pi$}. A $1$-cocycle is called \emph{principal} if it has the form $\varphi=\varphi_\alpha$ for some $\alpha\in \pi$, where $\varphi_\alpha(g) = \alpha{}^g(\alpha^{-1})$ for all $g\in G$. Note that principal $1$-cocycles are all equivalent to the trivial $1$-cocycle which is constant at the identity of $\pi$. Thus $H^1(G;\pi)$ is naturally based by the class of principal $1$-cocycles, which we denote by $1$. 

\begin{prop}\label{nonabelian}
 Assume that $G$ is finite and that $\pi$ is torsion-free. Then $\mathcal{G}=\mathcal{FIN}$ if and only if $H^1(H;\pi)=\{1\}$ for all subgroups $H\leq G$.
\end{prop}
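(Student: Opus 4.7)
The plan is to build a dictionary between finite subgroups of $\pi\rtimes G$ and pairs $(K,[\varphi])$, where $K\le G$ and $[\varphi]\in H^1(K;\pi)$, arranged so that membership of the subgroup in $\mathcal{G}$ corresponds exactly to triviality of the cohomology class. Once this dictionary is in place, the proposition becomes a direct translation.

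First I would set up the correspondence. Given a finite subgroup $H\le\pi\rtimes G$, torsion-freeness of $\pi$ forces $H\cap\pi=\{1\}$ (this is the \emph{only} place the torsion-free hypothesis is used), so the projection $p\colon\pi\rtimes G\to G$ restricts to an injection $H\hookrightarrow G$ with image some $K\le G$. For each $k\in K$ there is a unique $\varphi(k)\in\pi$ with $(\varphi(k),k)\in H$, and expanding
\[
(\varphi(k_1),k_1)(\varphi(k_2),k_2)=\bigl(\varphi(k_1){}^{k_1}\varphi(k_2),\,k_1k_2\bigr)
\]
shows at once that $\varphi$ is a $1$-cocycle. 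Conversely, any pair $(K,\varphi)$ with $K\le G$ and $\varphi$ a $1$-cocycle produces a finite subgroup $H(K,\varphi):=\{(\varphi(k),k):k\in K\}\le\pi\rtimes G$, and the two constructions are mutually inverse.

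Next I would track how conjugation in $\pi\rtimes G$ interacts with these pairs. A direct calculation gives
\[
(\alpha,1)(\varphi(k),k)(\alpha,1)^{-1}=\bigl(\alpha\,\varphi(k)\,{}^k\alpha^{-1},\,k\bigr),
\]
so that conjugation by $(\alpha,1)\in\pi$ alters $\varphi$ by the coboundary of $\alpha$; in particular the principal cocycle $\varphi_{\alpha^{-1}}$ arises exactly as the obstruction to $H(K,\varphi)$ being $\pi$-conjugate to $K\times\{1\}\le G$. Conjugation by $(1,g)\in G$ sends $H(K,\varphi)$ to $H(gKg^{-1},\varphi')$ with $\varphi'(k')={}^g\varphi(g^{-1}k'g)$, and under the canonical identification $H^1(K;\pi)\cong H^1(gKg^{-1};\pi)$ this preserves cohomology classes. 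Consequently, $H(K,\varphi)$ is conjugate in $\pi\rtimes G$ to a subgroup of $G$ (i.e.\ lies in $\mathcal{G}$) if and only if $[\varphi]=1\in H^1(K;\pi)$.

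With the dictionary in hand both implications are immediate. For the forward direction, if $\mathcal{G}=\mathcal{FIN}$ then for any $K\le G$ and any $1$-cocycle $\varphi$ the finite subgroup $H(K,\varphi)$ lies in $\mathcal{G}$, forcing $[\varphi]=1$; hence $H^1(K;\pi)=\{1\}$. For the converse, $\mathcal{G}\subseteq\mathcal{FIN}$ is automatic since $G$ is finite, and vanishing of every $H^1(H;\pi)$ together with the dictionary shows that every finite subgroup of $\pi\rtimes G$ is conjugate to a subgroup of $G$, so $\mathcal{FIN}\subseteq\mathcal{G}$. The one step requiring genuine care is the explicit semi-direct product computation that matches the coboundary relation on cocycles with $\pi$-conjugation of subgroups; this is elementary but must be carried out with the left-action conventions of the paper to avoid sign or side errors.
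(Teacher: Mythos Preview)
Your proposal is correct and follows essentially the same approach as the paper: both arguments rest on the bijection between finite subgroups of $\pi\rtimes G$ mapping isomorphically onto $K\le G$ and $1$-cocycles $K\to\pi$, together with the computation that conjugation in $\pi\rtimes G$ corresponds to the equivalence relation defining $H^1(K;\pi)$. The paper cites Usenko for the subgroup description and performs a single direct conjugation calculation, whereas you derive the dictionary yourself and separately analyze conjugation by $\pi$ and by $G$; these are cosmetic differences only.
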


\begin{proof}
Assume $\mathcal{G}=\mathcal{FIN}$. Let $\varphi: H\to \pi$ be a $1$-cocycle. We obtain a finite subgroup $H_\varphi$ of $\pi\rtimes G$ by setting $H_\varphi=\{(\varphi(h),h)\mid h\in H\}$, which by assumption is conjugate to a subgroup of $G$. Thus there exists some $(\alpha,g)\in \GG$ for which $(\alpha,g)^{-1} H_\varphi (\alpha,g)\leq 1\times G$. This means that for all $h\in H$, we have
\[
(\alpha,g)^{-1} (\varphi(h),h) (\alpha,g)  = ({}^{g^{-1}}(\alpha^{-1}\varphi(h){}^h \alpha),g^{-1}h g) \in 1\times G,
\]
and therefore $\alpha^{-1}\varphi(h){}^h \alpha=1\in \pi$, or $\varphi(h) = \alpha {}^h(\alpha^{-1})$, and $\varphi$ is principal. Hence $H^1(H;\pi)=\{1\}$ as claimed.

Conversely, assume $H^1(H;\pi)=\{0\}$ for all $H\leq G$. Since $G$ is finite, $\mathcal{G}\subseteq\mathcal{FIN}$. So let $\widetilde{H}\leq \pi\rtimes G$ be finite; we must show that $\widetilde{H}\in \mathcal{G}$. Let $p:\pi\rtimes G\to G$ denote the projection, and let $H:=p(\widetilde{H})\leq G$. Observe that, since $\pi$ is torsion-free, the intersection $\pi\cap \widetilde{H}$ is trivial. By the characterisation of subgroups of semi-direct products (described for instance by Usenko \cite{Usenko}), there exists a $1$-cocycle $\varphi_H:H\to \pi$ such that $\widetilde{H}=\{(\varphi_H(h),h)\mid h\in H\}$. By assumption, $\varphi_H$ is principal. Thus there exists $\alpha\in \pi$ such that
\begin{align*}
\widetilde{H}  & = \{(\alpha{}^h(\alpha^{-1}) ,h)\mid h\in H\}\\
   & = (\alpha,1)(1,H) (\alpha,1)^{-1}.
\end{align*}
Hence $\widetilde{H}\in \mathcal{G}$ as claimed.
\end{proof}

\begin{ex}\label{ex:dihedral}
Let $\pi=\Z$ with $G=\Z/2$ acting by the sign automorphism. Then $\pi\rtimes G=\Z\rtimes \Z/2$ is the infinite dihedral group, and since $H^1(G;\pi)$ is of order $2$ we have $\FG\neq\FIN$ in this case. 

In fact, there exist subgroups $H\leq \pi\rtimes G$ isomorphic to $\Z/2$ which are not conjugate to $G$. By Shapiro's Lemma \ref{lemma: Shapiro}, we have 
\[
\infty=\cld(\Z/2)=\cld_{H\cap\FG}(H)\leq\cld_\FG(\pi\rtimes G)=\cld_G(\pi).
\]
Note that $\ul{\cld}(\pi\rtimes G)=\ul{\gld}(\pi\rtimes G)=1$ in this case.

More generally, if there exists a finite subgroup $H\le\pi\rtimes G$ such that $K\notin\FG$ for every non-trivial subgroup $1 \neq K\le H$, then $\cld_G(\pi)=\gld_G(\pi)=\infty$.
\end{ex}

\begin{ex}
The following example, contained in \cite{Leary-Nucinkis}, illustrates that $\cld_G(\pi)$ can be finite even when $\FG\neq\mathcal{FIN}$. Furthermore, in this example one has
\[
\ul{\cld}(\pi\rtimes G)=3>2=\cld_G(\pi),
\]
which together with Example \ref{ex:dihedral} illustrates that we cannot expect a general inequality between the proper cohomological dimension of the semi-direct product and the equivarant cohomological dimension. We thank an anonymous referee for pointing out the second equality above and its proof.

There is an admissible action of $G=A_5$ on an acyclic $2$-dimensional flag complex $L$ without $G$-fixed points, such that $L^H$ is acyclic (and in particular non-empty) for all proper subgroups $H<A_5$. This fundamental example due to Floyd and Richardson \cite{FR} appears in many papers, including \cite{Adem,BDP,BB,BLN}.

 Let $R_L$ be the right-angled Artin group associated to the $1$-skeleton $L^{(1)}$, and let $\pi=H_L$ be the associated Bestvina--Brady group, that is, the kernel of the map $\phi: R_L\to \Z$ which maps all generators to $1\in \Z$. The action of $A_5$ on $L$ induces action of $A_5$ on $R_L$ and on $H_L$. For the semi-direct product $H_L\rtimes A_5$, it is shown in \cite[Theorem 3]{Leary-Nucinkis} that:

\be[(a)]
\item there are infinitely many conjugacy classes of subgroups $\overline{A_5}\leq H_L\rtimes A_5$ which project isomorphically to $A_5$;
\item all subgroups $\overline{H}\leq H_L\rtimes A_5$ which project isomorphically to a conjugate of proper subgroup $H<A_5$ are conjugate.
\ee
Hence $\mathcal{G}\neq\mathcal{FIN}$, by item (a). We remark that \cite[Theorem 6]{Leary-Nucinkis} gives 
\[
{\sf vcd}(H_L\rtimes A_5)=\cld(H_L)=2,\qquad \ul{\cld}(H_L\rtimes A_5)=\ul{\gld}(H_L\rtimes A_5)=3.
\]

We will show that $\cld_{A_5}(H_L)=2$. For this we recall some facts about the universal cover of the Salvetti complex of the right-angled Artin group $R_L$, which is a ${\rm CAT}(0)$ (hence contractible) cube complex $X$ on which $R_L$ acts freely \cite{BB,Leary-Nucinkis}. Choosing a base point $x_0\in X$ allows us to index the $k$-cubes of $X$ as $(\alpha x_0,\{v_1,\ldots, v_k\})$, where $\alpha \in R_L$ and the vertices $v_1,\ldots , v_k$ form a $k$-clique in $L^{(1)}$. In particular, the vertices of $X$ are indexed as $\alpha x_0$ with $\alpha\in R_L$. The $R_L$-action is given by $\beta\cdot(\alpha x_0,\{v_1,\ldots, v_k\})=(\beta\alpha x_0,\{v_1,\ldots , v_k\})$. There is an $R_L$-equivariant map $h:X\to \R$, where $\R$ is given the standard structure of a $\Z$-CW complex with $0$-cells $n\in\Z$, and $R_L$ acts via the homomorphism $\phi:R_L\to \Z$ and translation. In fact one can define $h(\alpha x_0)= \phi(\alpha)$ on vertices, then extend in an affine way to each cube. The level set $Q:= h^{-1}(0)\subseteq X$ is a sub-complex which is no longer cubical, but carries the structure of an $H_L$-CW complex. In the example at hand, $X$ is $3$-dimensional and $Q$ is $2$-dimensional. 

The group $A_5$ acts on $R_L$ and $H_L$ as described above, and also on $X$ via $g\cdot(\alpha x_0,\{v_1,\ldots , v_k\})=({}^g\alpha x_0,\{{}^g v_1,\ldots , {}^gv_k\})$. This action is compatible with the action of $R_L$, and hence $X$ becomes a $(R_L\rtimes A_5)$-CW complex. The sub-complex $Q$ is in the same way an $(H_L\rtimes A_5)$-CW complex.

The cellular chain complex $\ul{C}_*(Q)$ of $Q$ is a free resolution of $\ul{\Z}$ of length $2$ in the category of $\mathcal{O}_\mathcal{G}(H_L\rtimes A_5)$-modules. To prove this it suffices to show that for any subgroup $\G\leq H_L\rtimes A_5$, the fixed point set $Q^\G$ is acyclic if $\G\in\mathcal{G}$ and empty if $\G\notin \mathcal{G}$. 

The $2$-complex $Q$ is acyclic, by \cite[Corollary 7.2]{BB}. Thus by a theorem of Segev \cite{Segev,Adem}, each $Q^\G$ is either acyclic or empty. If $\G\in\mathcal{G}$ then $\G=\alpha H\alpha^{-1}$ for some $\alpha \in H_L$ and $H\leq A_5$. Then $Q^\G$ contains the vertex $\alpha x_0$, and is therefore acyclic. Conversely, if $Q^\G$ is non-empty it contains some vertex $\alpha x_0$ with $\alpha\in H_L$. Then $\G\leq \operatorname{Stab}(\alpha x_0)=\alpha A_5 \alpha^{-1}$, and it follows that $\G\in\mathcal{G}$.  \qed

\end{ex}

We now turn to the proof of Theorem \ref{EqSS}, restated here for convenience.

\begin{thm}[Equivariant Stallings--Swan Theorem]\label{EqSSrestatement}
Let $\pi$ be a discrete $G$-group, where $G$ is finite. The following are equivalent:
\be
\item $\gld_G(\pi)=1$;
\item $\cat_G(\pi)=1$;
\item $\cld_G(\pi)=1$;
\item $\pi$ is a non-trivial free group with basis a $G$-set.
\ee
\end{thm}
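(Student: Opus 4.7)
The plan is to close the circle of implications $(1)\Rightarrow(2)\Rightarrow(3)\Rightarrow(4)\Rightarrow(1)$. The easy direction $(1)\Rightarrow(2)\Rightarrow(3)$ is immediate from the Equivariant Eilenberg--Ganea Theorem~\ref{EqEG}: from the chain $\cld_G(\pi)\le\cat_G(\pi)\le\gld_G(\pi)$, combined with the fact that each of these three invariants equals $0$ precisely when $\pi$ is trivial, the value $1$ propagates downward. For $(4)\Rightarrow(1)$, I would take $T$ to be the Cayley graph of the free $G$-group $\pi$ with respect to its $G$-set basis $S$---a tree with vertex set $\pi$ and edges $\{\alpha,\alpha s\}$. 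The action $(\beta,g)\cdot\alpha=\beta\cdot{}^g\alpha$ makes $T$ into a $1$-dimensional $(\pi\rtimes G)$-CW complex, and a direct computation identifies the stabilizer of $\alpha\in\pi$ as $(\alpha,1)G(\alpha,1)^{-1}\in\mathcal{G}$, with edge stabilizers contained in vertex stabilizers. For $H\le G$ the fixed set $T^H$ is a non-empty subtree (containing $1\in\pi$), hence contractible; so $T$ is a $1$-dimensional model for $E_\mathcal{G}(\pi\rtimes G)$, yielding $\gld_G(\pi)=1$.

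For the main implication $(3)\Rightarrow(4)$, the first goal is to show that $\mathcal{G}=\mathcal{FIN}$ and $\pi$ is torsion-free. For any finite subgroup $H\le\pi\rtimes G$, Shapiro's Lemma~\ref{lemma: Shapiro} gives $\cld_{H\cap\mathcal{G}}(H)\le\cld_G(\pi)=1$, and Theorem~\ref{mainalg} then forces $H\in\mathcal{G}$ (otherwise $H\cap\mathcal{G}$ would be a family of proper subgroups of the finite group $H$, with cohomological dimension at least $2$). Combined with $\mathcal{G}\subseteq\mathcal{FIN}$ this gives $\mathcal{G}=\mathcal{FIN}$. Torsion-freeness of $\pi$ follows in the same spirit: a nontrivial finite cyclic $\langle(\alpha,1)\rangle$ with $\alpha\in\pi\setminus\{1\}$ of finite order would satisfy $\langle(\alpha,1)\rangle\cap\mathcal{G}=\{1\}$ (every $\mathcal{G}$-subgroup projects isomorphically to $G$ under $\pi\rtimes G\to G$, while $(\alpha,1)$ lies in $\pi$), contradicting the same argument. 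Consequently $\underline{\cld}(\pi\rtimes G)=\cld_G(\pi)=1$, and Dunwoody's theorem~\cite{Dunwoody} yields $\underline{\gld}(\pi\rtimes G)=1$; thus $\pi\rtimes G$ acts on a tree $T$ with finite (equivalently $\mathcal{G}$-) stabilizers, and the torsion-free normal subgroup $\pi$ acts freely, so $\pi$ is a free group.

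To exhibit a $G$-set basis for $\pi=\pi_1(X)$, where $X=T/\pi$ is a connected $G$-graph, I would invoke Proposition~\ref{nonabelian}---applicable since $\mathcal{G}=\mathcal{FIN}$ and $\pi$ is torsion-free---to obtain $H^1(H;\pi)=\{1\}$ for every $H\le G$. A direct cocycle calculation then shows that every $H$-fixed $\pi$-orbit in $T$ has an $H$-fixed representative, so the natural map $T^H/\pi^H\to X^H$ is a bijection and $X^H$ is a connected graph (the quotient of the contractible tree $T^H$ by the free action of $\pi^H$). The main obstacle will be the combinatorial statement that a finite group $G$ acting on a connected graph $X$ with $X^H$ connected for every $H\le G$ admits a $G$-equivariant spanning tree; I plan to prove this by induction on the poset of subgroups of $G$, starting from a spanning tree of $X^G$ and extending equivariantly across each orbit type using the connectedness of the $X^H$. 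Granted such a spanning tree $F\subseteq X$, one obtains a $G$-fixed basepoint $v_0\in F\cap X^G$ automatically; the edges of $X\setminus F$ then form a $G$-invariant set, and the associated loops $\ell_e$ at $v_0$ satisfy $g\cdot\ell_e=\ell_{g\cdot e}$ and constitute the desired $G$-set basis of $\pi=\pi_1(X,v_0)$.
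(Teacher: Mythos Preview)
Your proposal is correct and follows essentially the same route as the paper: the easy implications via Theorem~\ref{EqEG}, the Cayley tree for $(4)\Rightarrow(1)$, and for $(3)\Rightarrow(4)$ the reduction $\mathcal{G}=\mathcal{FIN}$ via Theorem~\ref{mainalg} and Shapiro, then Dunwoody to obtain a tree, Proposition~\ref{nonabelian} to show each $X^H$ is connected, and finally a $G$-invariant spanning tree of $X=T/\pi$ to produce the $G$-set basis. The only differences are cosmetic---the paper cites the Kano--Sakamoto lemma for the invariant spanning tree (with the precise hypothesis $X^{G_v}$ connected for each vertex $v$) rather than proving it by induction, passes to the barycentric subdivision to make $X$ a simplicial $G$-graph, and deduces freeness of $\pi$ directly from Stallings--Swan via $\cld(\pi)\le 1$ rather than from the tree action.
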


\begin{proof}
The implications (1)$\implies$(2)$\implies$(3) follow from Theorem \ref{EqEG} and the fact that all three invariants are zero if and only if $\pi$ is trivial.

Let us prove that (4)$\implies$(1). Suppose that $\pi$ is a non-trivial free group with basis a $G$-set. Then as a $K(\pi,1)$ we may take a graph $X$ with a single vertex and edges indexed by the basis elements. The group $G$ acts by fixing the vertex and permuting the edges according to the action of $G$ on the basis (preserving orientations), turning $X$ into a $G$-CW complex. The universal cover $\widetilde{X}$ is a tree, with vertices indexed by the elements of $\pi$. It is also a $(\GG)$-CW complex, which we claim is a model for $E_\FG(\GG)$. The action of $\GG$ on the vertices is given by $(\alpha,g)\cdot \alpha_0 = \alpha{}^g \alpha_0$, so as in the proof of Lemma \ref{Epimodel} we conclude that the isotropy of any vertex of $\widetilde{X}$ is in $\FG$. Since an element fixes an edge if and only if it fixes both its vertices, and $\FG$ is closed under intersections, we see that $\widetilde{X}$ has isotropy in $\FG$. Finally we observe that since $G$ fixes the vertex $1\in \widetilde{X}$, any conjugate of $G$ must fix a vertex and hence the fixed sub-complexes $\widetilde{X}^H$ for $H\in \FG$ are all non-empty and are trees, therefore contractible. We conclude that $\gld_G(\pi)=1$.

It remains only to prove that (3)$\implies$(4). So suppose that $\cld_G(\pi)=1$. By Shapiro's Lemma $\cld(\pi)\leq\cld_G(\pi)=1$, and hence $\pi$ is a non-trivial free group by Stallings--Swan. It remains to show that $\pi$ admits a basis which is permuted by $G$.

Firstly we claim that $\FG=\mathcal{FIN}$. This is a consequence of our main algebraic result Theorem \ref{mainalg}. For suppose $H$ is a finite subgroup of $\GG$ not in $\FG$. Then $H\cap\FG$ is a family of proper subgroups of $H$, and Theorem \ref{mainalg} together with Shapiro's Lemma yields
\[
2\le \cld_{H\cap\mathcal{G}}(H)\leq \cld_\FG(\GG)=\cld_G(\pi),
\]
contradicting $\cld_G(\pi)=1$.

Thus we find that $\cld_G(\pi)=\underline\cld(\pi\rtimes G)=1$; but then a well-known result of Dunwoody
\cite{Dunwoody,DicksDunwoody} implies that $\gld_G(\pi)=\underline\gld(\pi\rtimes G) =1$. Hence $\GG$ acts on a tree $T$ with finite stabilisers. (This also follows from a result of Karrass--Pietrowski--Solitar \cite{KPS}, Cohen \cite{Cohen} and Scott \cite{Scott}, since $\GG$ is virtually free.) The action of $\pi=\pi\times 1\nsub \GG$ on $T$ is free, and the quotient $X:=T/\pi$ is a $1$-dimensional $G$-CW complex with $\pi$ as fundamental group. Taking the barycentric subdivision if necessary, we may assume that $X$ is a simplicial $G$-graph. The result will follow if we can show that $X$ has a $G$-invariant spanning tree $X_0$, for then the quotient graph $X/X_0$ is a $G$-CW complex model for $K(\pi,1)$ with a single $0$-cell, and the $G$-set of (oriented) $1$-cells gives a basis of $\pi$.

The following lemma is proved in \cite{KanoSakamoto}, under the assumption that $X$ is finite. The same proof can be seen to work for $X$ infinite.

\begin{lem}
Let $G$ be a finite group and let $X=(V,E)$ be a simplicial $G$-graph. If $V^G\neq \varnothing$, then $X$ admits a $G$-invariant spanning tree $X_0$ if, and only if, for each $v\in V$ the sub-graph $X^{G_v}$ fixed by the stabiliser $G_v$ of $v$ is connected.
\end{lem}

Since the action of the finite group $G$ on the tree $T$ must fix some vertex $x_0$, the induced action of $G$ on $X:=T/\pi$ fixes the vertex $v_0=[x_0]$. Thus it suffices to show that for any vertex $v=[x]$ of $X$, the fixed sub-graph $X^{G_v}$ is connected.

 Let $H=G_v$ be the stabiliser of $v=[x]$. Observe that $H$ acts on the orbit $\pi x\subseteq T$. Thus for all $h\in H$ there is a unique $\alpha_h\in \pi$ such that ${}^h x = \alpha_h x$. The function $\varphi:H\to \pi$ given by $h \mapsto \alpha_h^{-1}$ is a $1$-cocycle. For if $h,k\in H$ then
 \begin{align*}
\alpha_{hk} x  & =  {}^{hk} x \\
                    & = {}^{h}\left({}^k x\right)\\
                    & = {}^h\left( \alpha_k x\right)\\
                    & = {}^h \alpha_k {}^h x \\
                    & =  {}^h \alpha_k \alpha_h x,
                    \end{align*}
from which it follows that $\varphi(hk) = (\alpha_{hk})^{-1} = (\alpha_h^{-1}) {}^h(\alpha_k^{-1}) = \varphi(h) {}^h \varphi(k)$.

 Since $\FG=\FIN$, by Proposition \ref{nonabelian} we have $H^1(H;\pi)=\{1\}$ and hence $\varphi$ is principal. This means there exists $\alpha\in \pi$ such that $\alpha {}^h(\alpha^{-1})=\alpha_h^{-1}$ for all $h\in H$. It follows that $\alpha^{-1}x \in \pi x$ is an $H$-fixed point, since for all $h\in H$ we have
 \begin{align*}
 {}^h( \alpha^{-1}x) & = {}^h(\alpha^{-1}){}^h x \\
  & = {}^h(\alpha^{-1}) \alpha_h x \\
  & = \alpha^{-1} \alpha_h^{-1} \alpha_h x \\
  & = \alpha^{-1} x.
  \end{align*}

 Then the unique geodesic in $T$ from $\alpha^{-1}x$ to $x_0$ is contained in $T^H$, and its image in the quotient graph is a path in $X^H$ from $v$ to $v_0$, which shows that $X^H$ is connected. This completes the proof.
\end{proof}

\section{Proof of Theorem \ref{mainalg}}

In this section we give the proof of Theorem \ref{mainalg}, which states that for any finite group $\G$ and any proper family $\F$ we have $\cld_\F(\G)\ge2$. We begin with two lemmas which reduce to the case of finite simple groups and the family $\PP$ of all proper subgroups.

\begin{lem} \label{lemma: proper family} If Theorem \ref{mainalg} holds for any finite group $\G$ and the family $\PP$ of \emph{all} proper subgroups of $\G$, then it holds for any finite group $\G$ and any family $\F$ of proper subgroups of $\G$.
\end{lem}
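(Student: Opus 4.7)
The plan is to reduce the general case to the special case of the family of all proper subgroups, using Shapiro's Lemma as the bridge. Given a family $\F$ consisting of proper subgroups of the finite group $\G$, I would begin by choosing a subgroup $H\le\G$ that is minimal (with respect to inclusion) among subgroups of $\G$ not belonging to $\F$. Such an $H$ exists because $\G\notin\F$ by hypothesis and the collection of subgroups of $\G$ outside $\F$ is non-empty and finite.

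Next, I would observe that by minimality of $H$, every proper subgroup of $H$ lies in $\F$. Consequently the induced family $H\cap\F=\{K\le H\mid K\in\F\}$ coincides with the family $\PP$ of all proper subgroups of $H$. Shapiro's Lemma (\lemref{lemma: Shapiro}) then supplies the inequality
\[
\cld_\PP(H)=\cld_{H\cap\F}(H)\le\cld_\F(\G).
\]

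Finally, the hypothesis of the lemma---namely that \thmref{mainalg} is known for the finite group $H$ together with its family $\PP$ of all proper subgroups---yields $\cld_\PP(H)\ge2$, and combining the two inequalities gives $\cld_\F(\G)\ge2$, as required. I expect no real obstacle here: the only ingredient beyond Shapiro's Lemma is the existence of the minimal $H$, which is immediate from finiteness of $\G$. The genuine content of \thmref{mainalg} is therefore entirely concentrated in the case $\F=\PP$, to which \lemref{lemma: proper family} reduces the general statement.
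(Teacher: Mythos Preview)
Your argument is correct and is essentially the same as the paper's: the paper arrives at the same subgroup $H$ by an iterative descent (choosing $H_1\notin\F$, then $H_2<H_1$ with $H_2\notin\F$, and so on until $H\cap\F=\PP$), whereas you simply pick a minimal element of $\{\text{subgroups of }\G\text{ not in }\F\}$ at the outset. Both routes rely on Shapiro's Lemma in exactly the same way, and your formulation is a slightly more direct packaging of the same idea.
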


\begin{proof} If $\F=\PP$, then we are done. Assume that $\F \neq \PP$. Then we can choose $H_1$ which is a proper subgroup and which is not an element of $\F$. By Lemma \ref{lemma: Shapiro},
\[\cld_{H_1 \cap \F}(H_1) \leq \cld_\F(\G).\]
The family $H_1 \cap \F$ only contains proper subgroups. If it contains all proper subgroups, then we are done by assumption and the latter inequality. Otherwise choose a proper subgroup $H_2 \leq H_1$ which does not belong to $H_1 \cap \F$. We can continue this procedure inductively. Since the group $\G$ is finite, the procedure has to terminate after finitely many steps, meaning that eventually we will find a subgroup $H$ such that $H \cap \F$ is the family of all proper subgroups of $H$. Now again the assumption and Lemma \ref{lemma: Shapiro} imply
\[ \cld_\F(\G) \geq \cld_{H \cap \F}(H) \geq 2.\]
\end{proof}

Next we reduce the proof of Theorem \ref{mainalg} to simple groups. For this consider a  group $\G$, a family of subgroups $\F$, and a normal subgroup $N\nsub \G$. Consider the family of subgroups of $\G/N$ defined by
\[\F_N := \{L \leq \G/N \mid p^{-1}(L) \in \F \} \]
where $p \colon \G \to \G/N$ is the projection. This map induces a functor
\[p^* \colon \mathcal{O}_{\F_N}\G/N \to \OFG\]
which pulls back the group action. More precisely it sends $(\G/N)/(S/N)$, where $N \leq S$ and $S \in \F$, to the coset $\G/S$. The latter functor in turn by pre-composition provides a functor
\[p_* \colon \OFG-\text{Mod} \to \mathcal{O}_{\F_N}\G/N-\text{Mod}.\]
Since it is induced by pre-composition, $p_*(\ul{\Z})=\ul{\Z}$ and $p_*$ is exact and it preserves direct sums. Finally, for any $S \in \F$, we have an isomorphism of $\mathcal{O}_{\F_N}\G/N$-modules
\[ p_*(\Z[\OFG(-, \G/S)]=\Z[\OFG(p^*(-), \G/S)] \cong  \Z[\mathcal{O}_{\F_N}\G/N(-, (\G/N)/(S/N)],\]
if $N \leq S$, and
$p_*(\Z[\OFG(-, \G/S)]=0,$
if $N$ is not contained in $S$. These follow since $(\G/S)^N$ is isomorphic as a $\G/N$-set to $(\G/N)/(S/N)$ when $N \leq S$ and is empty otherwise. Hence we conclude that $p_*$ preserves projective resolutions of $\ul{\Z}$ and hence the following well-known lemma holds:

\begin{lem} \label{lemma: quotient cld} Let $\G$ be a group, $\F$ a family of subgroups of $\G$, and $N\nsub\G$ a normal subgroup. Then
\[ \cld_{\F_N}(\G/N) \leq \cld_{\F}(\G). \]
\end{lem}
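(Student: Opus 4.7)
The plan is to lift a minimal projective resolution of $\ul{\Z}$ in $\OFG$-Mod to a projective resolution in $\mathcal{O}_{\F_N}(\G/N)$-Mod along the pre-composition functor $p_*$. All of the substantive work has already been carried out in the paragraph immediately preceding the lemma; the proof itself is then a formal consequence of those preparatory observations.

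Specifically, from the preceding discussion I would extract the following four properties of $p_*\colon \OFG\text{-Mod}\to \mathcal{O}_{\F_N}(\G/N)\text{-Mod}$: it is exact, it commutes with direct sums, it sends the constant module $\ul{\Z}$ to $\ul{\Z}$, and it carries each representable module $\Z[\OFG(-,\G/S)]$ either to a representable $\mathcal{O}_{\F_N}(\G/N)$-module (when $N\le S$) or to the zero module (when $N\not\le S$). Since every projective is a direct summand of a direct sum of representables, and $p_*$ commutes with direct sums and takes representables to (possibly zero) representables, it follows that $p_*$ carries projectives to projectives.

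Given these facts, the inequality is essentially immediate. If $\cld_\F(\G)=\infty$ there is nothing to prove, so I would assume $n := \cld_\F(\G)$ is finite and fix a projective resolution
\[
0 \to P_n \to P_{n-1} \to \cdots \to P_0 \to \ul{\Z} \to 0
\]
in $\OFG$-Mod of length $n$. Applying $p_*$ term by term then produces an exact sequence
\[
0 \to p_*(P_n) \to p_*(P_{n-1}) \to \cdots \to p_*(P_0) \to \ul{\Z} \to 0
\]
in $\mathcal{O}_{\F_N}(\G/N)$-Mod whose terms are projective, that is, a projective resolution of $\ul{\Z}$ of length at most $n$. Hence $\cld_{\F_N}(\G/N)\le n = \cld_\F(\G)$, as required. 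There is no real obstacle: the main content of the lemma lies entirely in the preparatory calculation describing $p_*$ on representable modules, so the only step that needs any care is verifying, via that calculation, that $p_*$ preserves projectivity.
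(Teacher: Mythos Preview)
Your proposal is correct and follows essentially the same approach as the paper: the paper's proof amounts to the single observation (stated just before the lemma) that $p_*$ preserves projective resolutions of $\ul{\Z}$, from which the inequality follows immediately. Your write-up makes the deduction slightly more explicit (spelling out why $p_*$ preserves projectives via the structure of projectives as summands of sums of representables), but there is no substantive difference.
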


As a consequence we get a further reduction for the proof of Theorem \ref{mainalg}:

\begin{cor} \label{corollary: simple groups and proper family} If Theorem \ref{mainalg} holds for any finite simple group $G$ and the family $\PP$ of all proper subgroups of $G$, then it holds for any finite group $\G$ and any family $\F$ of proper subgroups.
\end{cor}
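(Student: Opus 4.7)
The plan is to combine the two preceding lemmas. By \lemref{lemma: proper family} it suffices to prove the statement in the case where $\F = \PP$ is the family of all proper subgroups of the finite group $\G$; so assume this. We may also assume $\G$ is non-trivial, since otherwise there is no family of proper subgroups at all.

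Next, I would pass to a simple quotient. Since $\G$ is finite and non-trivial, it admits a maximal proper normal subgroup $N \nsub \G$, and the quotient $\G/N$ is a finite simple group. Let $p\colon \G\to \G/N$ be the projection. The key observation is that, because $p$ is surjective, a subgroup $L \leq \G/N$ is proper in $\G/N$ if and only if $p^{-1}(L)$ is proper in $\G$. Hence, in the notation preceding \lemref{lemma: quotient cld},
\[
\PP_N = \{L \leq \G/N \mid p^{-1}(L) \in \PP\}
\]
is precisely the family of all proper subgroups of $\G/N$.

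Applying the hypothesis of the corollary to the finite simple group $\G/N$ with the family of all its proper subgroups, we obtain $\cld_{\PP_N}(\G/N) \geq 2$. Combining this with \lemref{lemma: quotient cld} yields
\[
2 \leq \cld_{\PP_N}(\G/N) \leq \cld_{\PP}(\G),
\]
which is what we wanted to prove.

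Essentially no step presents a real obstacle here; the argument is a straightforward two-step reduction (first from an arbitrary family of proper subgroups to $\PP$ via Shapiro's Lemma, then from $\G$ to a simple quotient via the restriction functor along $p$). The only point that requires a brief check is the identification of $\PP_N$ with the family of all proper subgroups of $\G/N$, and this follows immediately from surjectivity of $p$.
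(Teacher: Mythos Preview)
Your proof is correct and follows essentially the same approach as the paper: reduce to the family $\PP$ via \lemref{lemma: proper family}, then pass to a simple quotient and apply \lemref{lemma: quotient cld}. The only difference is cosmetic: you reach a simple quotient in one step by choosing a maximal proper normal subgroup, whereas the paper iterates through successive non-trivial normal subgroups; your version is slightly more direct.
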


\begin{proof} Let $\Gamma$ be any finite group and $\PP_{\Gamma}$ the family of all proper subgroups of $\Gamma$. By Lemma \ref{lemma: proper family} it suffices to prove that $\cld_{\PP_{\Gamma}}(\Gamma) \geq 2$. If $\Gamma$ is simple we are done by the assumption. If it is not simple, then there exists a proper non-trivial normal subgroup $N_1 \leq \Gamma$. By Lemma \ref{lemma: quotient cld}, we get
\[ \cld_{\PP_{\Gamma/N_1}} (\Gamma/N_1) \leq \cld_{\PP_{\Gamma}}(\Gamma),\]
where $\PP_{\Gamma/N_1}$ is the family of all proper subgroups of $\Gamma/N_1$. Now if $\Gamma/N_1$ is simple, then we are done. Otherwise we find a proper non-trivial normal subgroup $N_2$ in $\Gamma/N_1$. We can continue the procedure inductively. Since the group $\Gamma$ is finite and each step produces a group of strictly smaller cardinality than in the previous step, this procedure has to terminate after finitely many steps, meaning that we will find a simple quotient $G$ of $\Gamma$ such that
\[\cld_{\PP}(G) \leq \cld_{\PP_{\Gamma}}(\Gamma),\]
where $\PP$ is the family of all proper subgroups of $G$. By assumption this finishes the proof.
\end{proof}

The rest of the proof consists in showing that $\cld_\PP(G)\ge2$ for any finite simple group $G$ and the family $\PP$ of all proper subgroups.

First we give a general lemma which bounds $\cld_\F(\G)$ from below in terms of the cohomology of the poset of subgroups in $\F$.

\begin{lem} \label{lemma: poset dimension vs orbit dimension} Let $\G$ be a group  and $\F$ a family of subgroups of $\G$. Let $\mathcal{A}_{\F}(\G)$ denote the poset of all subgroups of $\G$ contained in $\F$. Then
\[\cld(\mathcal{A}_{\F}(\G)) \leq \cld_{\F}(\G).\]
\end{lem}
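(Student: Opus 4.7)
The plan is to construct a natural functor $\iota\colon \mathcal{A}_\F(\G)\to \OFG$ and to show that the induced restriction functor on modules carries projective resolutions of $\ul\Z$ to projective resolutions of $\ul\Z$, yielding the claimed inequality on cohomological dimensions.

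First I would define $\iota$ on objects by $H\mapsto \G/H$, and on the poset inclusion morphism $H\le K$ by the canonical $\G$-equivariant quotient map $\G/H\to \G/K$. Pre-composition with $\iota$ then yields a functor $\iota^*$ from $\OFG$-modules to $\mathcal{A}_\F(\G)$-modules which, being given by pre-composition, is exact, preserves arbitrary direct sums and retracts, and visibly sends the constant $\OFG$-module $\ul\Z$ to the constant $\mathcal{A}_\F(\G)$-module $\ul\Z$.

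The main step is the calculation that $\iota^*$ sends free $\OFG$-modules to free $\mathcal{A}_\F(\G)$-modules. For $K\in\F$, evaluation at $H\in\F$ gives
\[
\iota^*\bigl(\Z[\OFG(-,\G/K)]\bigr)(H) \;=\; \Z[\OFG(\G/H,\G/K)] \;=\; \Z[(\G/K)^H].
\]
The crucial observation is the disjoint union decomposition $(\G/K)^H=\coprod_{gK\in\G/K,\; H\le gKg^{-1}}\{gK\}$. Since $\F$ is closed under conjugation, each conjugate $gKg^{-1}$ belongs to $\F$, and as a contravariant functor in $H$ the summand indexed by $gK$ equals the representable $\Z[\mathcal{A}_\F(\G)(-,gKg^{-1})]$. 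Therefore
\[
\iota^*\bigl(\Z[\OFG(-,\G/K)]\bigr) \;\cong\; \bigoplus_{gK\in \G/K}\Z[\mathcal{A}_\F(\G)(-,gKg^{-1})],
\]
which is free. Combined with preservation of direct sums and retracts, this shows $\iota^*$ carries projective $\OFG$-modules to projective $\mathcal{A}_\F(\G)$-modules.

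The conclusion then falls out: if $P_\bullet\to \ul\Z$ is a projective resolution over $\OFG$ of length $d=\cld_\F(\G)$, then since $\iota^*$ is exact, preserves $\ul\Z$, and preserves projectives, the complex $\iota^*(P_\bullet)\to \ul\Z$ is a projective resolution over $\mathcal{A}_\F(\G)$ of length at most $d$, so $\cld(\mathcal{A}_\F(\G))\le \cld_\F(\G)$. I expect the main technical point to be the decomposition of $\iota^*$ applied to representables, specifically verifying that the indexing set of the resulting sum is the full coset space $\G/K$ (rather than a quotient identifying cosets with the same stabiliser) and that the presheaf transition maps induced by the inclusions $(\G/K)^{H'}\hookrightarrow (\G/K)^H$ for $H\le H'$ correspond exactly to the structure maps of the $\mathcal{A}_\F(\G)$-representables.
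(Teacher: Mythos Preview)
Your proof is correct and follows essentially the same approach as the paper: restrict along a functor into $\OFG$ and verify that representables decompose as direct sums of representables, so that projective resolutions of $\ul\Z$ are preserved. The only cosmetic difference is that the paper factors your functor $\iota$ through an auxiliary category $\OFG_*$ of pointed orbits (the Grothendieck construction of the forgetful functor $\OFG\to\mathrm{Sets}$), shows $\OFG_*$ is equivalent to $\mathcal{A}_\F(\G)$, and phrases the decomposition of representables there; your direct computation of $\iota^*\bigl(\Z[\OFG(-,\G/K)]\bigr)\cong\bigoplus_{gK\in\G/K}\Z[\mathcal{A}_\F(\G)(-,gKg^{-1})]$ is exactly what that detour produces after transporting along the equivalence.
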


\begin{proof} Consider the category $\OFG_*$ of pointed objects in $\OFG$. The objects of $\OFG_*$ are pairs $(\G/H, \gamma H)$, where $H \in \F$ and morphisms are equivariant maps which respect the distinguished cosets (note that $\OFG_*$ is the Grothendieck construction of the functor $\OFG \to \text{Sets}$, sending a coset to its underlying set). It is easy to see that there is at most one morphism between any two objects in $\OFG_*$.

We have the forgetful functor
\[u \colon \OFG_* \to   \OFG\]
which forgets the distinguished coset. Pre-composing with $u$ induces a functor
\[u^* \colon \OFG-\text{Mod} \to \OFG_*-\text{Mod}\]
which clearly sends $\ul{\Z}$ to $\ul{\Z}$, is exact and preserves direct sums. Moreover, for any $H \in \F$, we have an isomorphism of $\OFG_*$-modules
\[u^*(\Z[\OFG(-, \G/H)]) \cong \bigoplus_{\gamma H \in \G/H} \Z[\OFG_*(-, (\G/H, \gamma H))]. \]
Hence $u^*$ preserves projective resolutions of $\ul{\Z}$. Consequently,
\[ \cld(\OFG_*) \leq \cld_{\F}(\G)\]
Now the obvious functor $\mathcal{A}_{\F}(\G) \to \OFG_*$ sending $H \in \F$ to $(\G/H,1H)$ is fully-faithful and essentially surjective, showing that $\mathcal{A}_{\F}(\G)$ is equivalent to $\OFG_*$ and thus $\cld(\OFG_*)=\cld(\mathcal{A}_{\F}(\G))$. This finishes the proof.
\end{proof}

In view of Corollary \ref{corollary: simple groups and proper family} and Lemma \ref{lemma: poset dimension vs orbit dimension}, Theorem \ref{mainalg} will be proved once we can prove the following result.

\begin{prop} \label{proposition: crowns} Let $G$ be a non-abelian finite simple group and $\PP$ denote the family of all proper subgroups of $G$. Then $\cld(\A_{\PP}(G)) \geq 2$.
\end{prop}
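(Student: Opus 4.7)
The strategy is to produce an explicit non-zero class in $\Ext^2_{\A_\PP(G)}(\ul{\Z}, M)$ for a carefully chosen $\A_\PP(G)$-module $M$. Exploiting that $\{1\}$ is the unique minimum of the poset $\A_\PP(G)$, I would take $M$ to be the representable module $\Z[\A_\PP(G)(-, \{1\})]$: concretely, $M(\{1\}) = \Z$, $M(H) = 0$ for $H \neq \{1\}$, with all structure morphisms zero. Although $M$ is projective, this does not trivialise $\Ext^\bullet(\ul{\Z}, M)$, since projectivity of the \emph{second} argument of $\Ext$ is not an acyclicity condition.

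To compute these Ext groups I would use the bar/nerve resolution $C_\bullet \to \ul{\Z}$, where $C_n = \bigoplus_{c_0 < c_1 < \cdots < c_n} \Z[\A_\PP(G)(-, c_0)]$ is indexed by strict chains in $\A_\PP(G)$. By Yoneda, $\Hom(\Z[\A_\PP(G)(-, c_0)], M) = M(c_0)$ vanishes unless $c_0 = \{1\}$; the surviving chains $\{1\} < c_1 < \cdots < c_n$ biject with $(n-1)$-simplices of the order complex $|P'|$ of $P' := \A_\PP(G) \setminus \{\{1\}\}$, the poset of non-trivial proper subgroups of $G$. A direct check (the $d_0$-face term dies because the structure maps of $M$ are zero) identifies the $\Hom$-complex with the augmented simplicial cochain complex $\tilde{C}^{\bullet-1}(|P'|;\Z)$, yielding a natural isomorphism
\[
\Ext^n_{\A_\PP(G)}(\ul{\Z}, M) \;\cong\; \tilde{H}^{n-1}(|P'|; \Z).
\]
The problem thereby reduces to showing $\tilde{H}^1(|P'|;\Z) \neq 0$ whenever $G$ is non-abelian finite simple.

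For this topological statement, I would locate a \emph{crown} inside $P'$: a cyclic sub-poset whose order complex is a circle embedded in $|P'|$. Since $G$ is non-abelian simple there exists a prime $p \mid |G|$ with more than one Sylow $p$-subgroup (otherwise all Sylows would be normal and $G$ would be nilpotent, hence cyclic of prime order, contradicting non-abelianness); choosing distinct Sylow $p$-subgroups $S_1, \dots, S_r$ whose consecutive intersections $K_i := S_i \cap S_{i+1}$ are non-trivial yields a sub-poset $Q \subseteq P'$ of the form $K_1 < S_1 > K_2 < S_2 > \cdots > K_r < S_r > K_1$, with $|Q| \cong S^1$. The main obstacle is then to show that this embedded circle does not bound in the ambient complex. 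I would attempt this either (a) by constructing a poset retraction $r: P' \to Q$, whose geometric realisation makes $|Q| \simeq S^1$ a retract of $|P'|$ and forces $\Z = H^1(|Q|)$ to inject into $H^1(|P'|)$; or (b) via an Euler-characteristic argument using the classical lattice identity $\tilde{\chi}(|P'|) = \mu(\{1\}, G)$ for the Möbius function of the subgroup lattice, together with the known non-vanishing of $\mu(\{1\}, G)$ for non-abelian finite simple $G$, supplemented by a dimension/connectivity analysis to pin the non-vanishing down to degree $1$.
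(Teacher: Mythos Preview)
Your reduction $\Ext^n_{\A_\PP(G)}(\ul{\Z}, M_0) \cong \tilde{H}^{n-1}(|P'|;\Z)$ for $M_0$ the representable at $\{1\}$ is correct, and a non-zero $\tilde{H}^1(|P'|;\Z)$ would indeed give $\cld(\A_\PP(G))\ge 2$. But both routes you propose for this non-vanishing have genuine gaps. The Sylow crown already fails for $G=A_5$: for every prime $p\mid 60$ the Sylow $p$-subgroups intersect pairwise trivially (the Sylow $5$- and $3$-subgroups have prime order, and the five Klein four-groups partition the fifteen involutions), so no chain of non-trivial intersections $K_i=S_i\cap S_{i+1}$ exists for any $p$. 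The Euler-characteristic route is also incomplete: even granting $\mu(1,G)\neq 0$ for all non-abelian simple $G$ (itself not a fact one can simply quote), $\tilde{\chi}(|P'|)\neq 0$ only forces $\tilde{H}^i\neq 0$ for \emph{some} $i$, and the promised ``dimension/connectivity analysis'' pinning this to $i=1$ is not supplied---for $A_5$ the order complex $|P'|$ is $2$-dimensional, so a priori the non-trivial cohomology could sit entirely in degree $2$. The poset retraction $P'\to Q$ is asserted but not constructed, and for a generic sub-poset there is no reason one should exist.

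The paper avoids the topology of $|P'|$ altogether. It invokes Cheng's combinatorial criterion---for a finite poset $\CC$ with initial object, $\cld(\CC)\le 1$ if and only if successively deleting ``superfluous'' elements collapses $\CC$ to a point---and then exhibits two interlocking collections of subgroups that obstruct this collapse: a set $A$ of maximal subgroups and a set $B$ of pairwise intersections of maximal possible order, arranged so that every $a\in A$ contains at least two members of $B$ and every $b\in B$ lies in at least two members of $A$. The group-theoretic input is that a maximal subgroup of a non-abelian simple group cannot be a Frobenius complement, so some pair of distinct maximal subgroups intersects non-trivially, and one can further arrange the intersection to be non-normal in both; conjugating along an alternating word then manufactures the collections $A,B$. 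One then checks directly that, under a suitable deletion order, no element of $A\cup B$ ever becomes superfluous, so Cheng's process cannot terminate at a point. This is a purely combinatorial argument on the poset and never requires showing that any cycle is homologically non-trivial in the ambient complex $|P'|$---precisely the step where your approach stalls.
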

\begin{rem} If $G$ is a cyclic group of prime order then $\cld_{\PP}(G)=\cld(G)=\infty$, and in that case it trivially holds that the cohomological dimension is bigger than 1.
\end{rem}
To prove the proposition, we will begin by proving the following:
\begin{prop}\label{proposition: subgroups}
Let $G$ be a non-abelian finite simple group. The lattice $\A_{\PP}(G)$ of proper subgroups of $G$ contains two non-empty collections of subgroups $A$ and $B$ such that:
\begin{enumerate}
\item All subgroups in $A$ are maximal.
\item For every $b\in B$ there are at least two subgroups $a_1,a_2\in A$ such that $b\subseteq a_1$ and $b\subseteq a_2$.
\item For every $a\in A$ there are at least two subgroups $b_1,b_2\in B$ such that $b_1\subseteq a$ and $b_2\subseteq a$.
\item The cardinality of every $b\in B$ is the maximal cardinality of intersection of two maximal subgroups.
\end{enumerate}
\end{prop}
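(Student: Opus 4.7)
The plan is to build $A$ and $B$ directly from the maximal subgroups of $G$, parametrised by the largest possible pairwise intersection. Set
\[
m := \max\bigl\{|M_1\cap M_2| : M_1 \neq M_2 \text{ maximal subgroups of } G\bigr\},
\]
define
\[
B := \{M_1\cap M_2 : M_1\neq M_2\text{ maximal},\ |M_1\cap M_2| = m\},
\]
and let
\[
A := \{M \text{ maximal} : M \text{ contains at least two distinct elements of } B\}.
\]
Conditions $(1)$, $(3)$, $(4)$ then hold immediately from the definitions, and the substantive work is establishing $(2)$ together with non-emptiness of $A$ and $B$. The verification proceeds in two main steps: an arithmetic lemma giving $m \geq 2$, and a conjugation argument for $(2)$.

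First I would show $m \geq 2$. Assume for contradiction that every pair of distinct maximal subgroups intersects trivially. Since $G$ is non-abelian simple (hence not cyclic of prime order), every non-identity element of $G$ lies in a proper, hence in a maximal, subgroup, and under the trivial-intersection hypothesis in exactly one. Grouping maximal subgroups by $G$-conjugacy with class representatives $M_1,\ldots,M_k$, and using $N_G(M_i) = M_i$ (since simplicity prevents any proper subgroup from being normal), the partition identity
\[
|G|-1 = \sum_i [G:M_i]\,(|M_i|-1)
\]
rearranges to $\sum_i 1/|M_i| = k - 1 + 1/|G|$. Since each $|M_i| \geq 2$, the left side is at most $k/2$, forcing $k < 2$; but $k = 1$ gives $|M_1| = |G|$ (contradicting properness) and $k = 0$ is absurd. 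Hence $m \geq 2$, so $B$ is non-empty and every $b \in B$ is a proper non-trivial subgroup of $G$.

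For $(2)$, the key observation is the following conjugation principle: for any $b \in B$ and any maximal $M$ with $b \subseteq M$, the $M$-orbit of $b$ under conjugation lies inside $B$ and inside $M$ (using $hMh^{-1} = M$ for $h \in M$, and the fact that conjugating $M_1 \cap M_2$ produces another intersection of distinct maximals of size $m$). This orbit has size $\geq 2$ precisely when $M \not\subseteq N_G(b)$. Since $b$ is proper and non-trivial and $G$ is simple, $N_G(b)$ is a proper subgroup, and the inclusion $M \subseteq N_G(b)$ forces $N_G(b) = M$ by maximality. Thus every maximal $M \supseteq b$ with $M \neq N_G(b)$ lies in $A$. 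To finish $(2)$, write $b = M_1 \cap M_2$ and case-split: if $N_G(b)$ is either not maximal, or is a maximal distinct from both $M_1$ and $M_2$, then $M_1$ and $M_2$ already furnish two elements of $A$ containing $b$; otherwise, say $N_G(b) = M_1$, in which case I would pick any $h \in M_1 \setminus M_2$ and set $M_3 := hM_2h^{-1}$, verifying that $h$ normalises $b$ so $b \subseteq M_3$, and that $M_3 \notin \{M_1, M_2\}$ because $h \notin M_2 = N_G(M_2)$ while $h \in M_1 = N_G(M_1)$ and $M_1 \neq M_2$. Then $M_2, M_3$ are two distinct elements of $A$ containing $b$, and non-emptiness of $A$ follows immediately.

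The main obstacle is the arithmetic step $m \geq 2$: ruling out the trivial-intersection configuration is essentially a Frobenius-style analysis, and the identity $\sum_i 1/|M_i| = k - 1 + 1/|G|$ combined with the crude bound $|M_i| \geq 2$ is the cleanest route I see that avoids any appeal to the classification of finite simple groups. The conjugation verification of $(2)$ is essentially mechanical once $m \geq 2$ is in hand, though the sub-case where $N_G(b)$ coincides with one of the maximal subgroups in the defining pair of $b$ requires the explicit construction of the third maximal $M_3 = hM_2h^{-1}$ to supply the second element of $A$.
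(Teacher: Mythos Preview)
Your proof is correct and takes a genuinely different route from the paper's.

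The paper first invokes Frobenius' theorem (via \cite{Isaacs}): fixing one maximal subgroup $H$, if $H\cap H^g=1$ for all $g\notin H$ then $H$ is a Frobenius complement and $G$ acquires a normal Frobenius kernel, contradicting simplicity. Having found a non-trivial intersection $T=H\cap K$ of maximal cardinality, the paper then argues separately that $T$ can be taken non-normal in both $H$ and $K$, and builds $A$ and $B$ as an explicit bi-infinite zigzag of conjugates $\{(y_1y_2)^n K(y_1y_2)^{-n}, (y_1y_2)^ny_1 H((y_1y_2)^ny_1)^{-1}\}$ and the analogous conjugates of $T$, checking conditions (2) and (3) by hand on this chain.

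Your construction is both more canonical and more elementary. Defining $B$ as \emph{all} intersections of maximal size and $A$ as all maximals containing two members of $B$ makes (1), (3), (4) automatic and reduces everything to (2). Your counting argument for $m\geq 2$ --- the partition identity $\sum_i 1/|M_i|=k-1+1/|G|$ forcing $k<2$ --- replaces the appeal to Frobenius' theorem (a character-theoretic result) with a half-page of arithmetic. The normaliser dichotomy you use for (2) is essentially the same mechanism the paper uses to arrange $T$ non-normal in both $H$ and $K$, but packaged more cleanly: rather than first massaging the pair $(H,K)$, you observe directly that any maximal $M\supseteq b$ other than $N_G(b)$ already lies in $A$, and handle the one exceptional case $N_G(b)=M_1$ by producing $M_3=hM_2h^{-1}$. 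The paper's zigzag gains nothing over this; your argument is strictly shorter and avoids external input.
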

\begin{proof}
We begin by showing that $G$ contains two maximal subgroups $H$ and $K$ such that $H\cap K$ is non-trivial and not normal in $H$ nor in $K$.  For this, start with a maximal subgroup $H<G$. Since $H$ is maximal, it is self normalizing, so $H\neq H^g$ for every $g\notin H$.
Now, if $H\cap H^g=1$ for every $g\notin H$ then $H$ is called a \textit{Frobenius complement} in  $G$. By \cite[Theorem 7.2]{Isaacs}, $G$ has a normal subgroup $N$ such that $HN=G$ and $H\cap N=1$. This contradicts the simplicity of $G$.

We thus know that some maximal subgroups of $G$ intersect non-trivially.
Take such a pair of maximal subgroups $(H_1,H_2)$ for which $|H_1\cap H_2|$ is maximal. We first would like to show that we can assume that $H_1\cap H_2$ is not normal in $H_1$ nor in $H_2$. Indeed, it is impossible that $H_1\cap H_2$ is normal in both $H_1$ and $H_2$, because then the normalizer of $H_1\cap H_2$ contains $\langle H_1,H_2\rangle = G$ since $H_1$ and $H_2$ are maximal and distinct. This implies that $H_1\cap H_2$ is normal in $G$, contradicting the fact that $G$ is simple.

Assume then that $H_1\cap H_2$ is normal in $H_1$ but not in $H_2$. Take $x\in H_1\backslash H_2$. Then $x(H_1\cap H_2)x^{-1}=H_1\cap H_2$. In particular we have that $H_1\cap H_2 = xH_1x^{-1}\cap xH_2x^{-1}\subseteq xH_2x^{-1}$.
The maximality of $H_2$ implies that $N_G(H_2)=H_2$ because $G$ is simple, so $H_2\neq xH_2x^{-1}$. The subgroup $H_1\cap H_2$ is then contained in the two distinct maximal subgroups $H_2$ and $xH_2x^{-1}$. By the maximality assumption on $|H_1\cap H_2|$ we see that $H_1\cap H_2 = H_2\cap xH_2x^{-1}$. This intersection is not normal in $H_2$, and by conjugating by $x$ we see that it is also not normal in $xH_2x^{-1}$ as required.

We thus have a diagram of the form
$$\xymatrix{H \ar@{-}[rd] & & K\ar@{-}[ld]\\ & T & }
$$
where $H$ and $K$ are maximal subgroups of $G$ and $T=H\cap K$ is non-trivial and not normal in $H$ nor in $K$. Since $T$ is not normal in $K$ there is $y_1\in K\backslash H $ such that $y_1Ty_1^{-1}\neq T$. There is also $y_2\in H\backslash K$ such that $y_2Ty_2^{-1}\neq T$.  The previous diagram then gives us the following diagram:
$$\xymatrix{& K \ar@{-}[rd] & & y_1Hy_1^{-1}\ar@{-}[ld]\ar@{-}[rd] & & (y_1y_2)K(y_1y_2)^{-1}\\T\ar@{-}[ur]&  & y_1Ty_1^{-1} & &  (y_1y_2)T(y_1y_2)^{-1}\ar@{-}[ru]&  }
$$

Take now the subgroup collections $$A:=\{(y_1y_2)^nK(y_1y_2)^{-n}\}_{n\in \Z}\cup \{ (y_1y_2)^ny_1H((y_1y_2)^ny_1)^{-1}\}\text{ and }$$
$$B:=\{(y_1y_2)^nT(y_1y_2)^{-n}\}_{n\in \Z}\cup \{((y_1y_2)^ny_1)T((y_1y_2)^ny_1)^{-1}\}_{n\in \Z}.$$
We prove that they satisfy the conditions of the proposition.
Firstly, all subgroups in $A$ are maximal, since they are all conjugate to either $H$ or to $K$.
Secondly, in order to prove the second and third conditions it will be enough to prove them for the subgroups $K$ and $y_1Hy_1^{-1}$ in $A$ and $T$ and $y_1Ty_1^{-1}$ in $B$, since all other subgroups are conjugate to these subgroups by $(y_1y_2)^n$ for some $n\in \Z$.

The group $K$ contains $T$ and $y_1Ty_1^{-1}$. These are different subgroups in $B$ by the assumption on $y_1$. Similarly the group $y_1Hy_1^{-1}$ contains the subgroups $y_1Ty_1^{-1}$ and $(y_1y_2)T(y_1y_2)^{-1}$ from $B$. Again, these subgroups are distinct because of the way we chose $y_2$.

The subgroup $y_1Ty_1^{-1}$ is contained in $K$ and in $y_1Hy_1^{-1}$. These two subgroups are different, since if $y_1Hy_1^{-1}=K$, then from the fact that $y_1\in K$ it follows that $H=K$ which is a contradiction.
Simiarly, $T$ is contained in $(y_1y_2)^{-1}y_1H((y_1y_2)^{-1}y_1)^{-1} = y_2^{-1}Hy_2=H$ and in $K$. Again, these two subgroups are different. Finally, the last condition on subgroups in $B$ follows from the way we constructed the subgroups in $B$.
This finishes the proof of the proposition.
\end{proof}
We next recall some notations and results from \cite{Cheng}. Since the modules in \cite{Cheng} are covariant functors, and the modules here are contravariant functors, we will change the notations accordingly.
If $\CC$ is a lattice then the \textit{depth} of $x\in \CC$ is the maximal $n$ such that there is a chain of the form $x=x_n<x_{n-1}<\cdots< x_0$ in $\CC$. For elements $x,y\in \CC$ we say that $x$ \textit{covers} $y$ if $y< x$ and there is no $z\in\CC$ such that $y<z<x$.
A vertex in $\CC$ is called \textit{superfluous} if it is either maximal and covers a unique element, or it is of depth 1 and is covered by a unique maximal element.
The poset $E(\CC)$ is the poset resulting from $\CC$ by successively removing superfluous elements from $\CC$. Cheng showed that the isomorphism type of $E(\CC)$ is independent of the order of removal of superfluous elements from $\CC$ \cite[Proposition 1.4]{Cheng}. He also proved the following:
\begin{lem}\label{lemma: cheng}\cite[Lemma 1.7]{Cheng} Let $\CC$ be a finite poset with an initial object. Then $\cld(\CC)\leq 1$ if and only if $E(\CC)=\{*\}$.
\end{lem}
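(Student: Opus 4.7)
The plan is to prove the lemma by a two-stage argument: first, show that removing a superfluous vertex from $\CC$ preserves the condition $\cld(\CC)\leq 1$; second, show that a finite poset with no superfluous vertices and more than one element has $\cld \geq 2$. Iterating the first stage yields $\cld(\CC)\leq 1 \Leftrightarrow \cld(E(\CC))\leq 1$, after which the second stage forces $E(\CC)=\{*\}$.

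For the invariance, let $x\in \CC$ be superfluous and write $\CC' := \CC\setminus\{x\}$ with inclusion $\iota\colon\CC'\hookrightarrow \CC$. Consider first the case where $x$ is maximal and covers the unique element $y$. The representables fit into a short exact sequence of $\CC$-modules
\[ 0 \to \Z[\CC(-, y)] \to \Z[\CC(-, x)] \to \delta_x \to 0, \]
where $\delta_x$ is concentrated at $x$ with value $\Z$. The pullback functor $\iota^*\colon\CC-\Mod\to \CC'-\Mod$ is exact, preserves representables and the constant module $\ul{\Z}$, so any length-$\leq 1$ projective resolution of $\ul{\Z}$ on $\CC$ restricts to one on $\CC'$. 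Conversely, a length-$\leq 1$ resolution on $\CC'$ lifts to $\CC$ by using the short exact sequence above to adjust the resolution at $x$. The dual case, where $x$ has depth $1$ and is covered by a unique maximal element $z$, proceeds analogously after reversing variances.

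For the second stage, suppose $\D := E(\CC)$ has at least two elements. By construction, every maximal element of $\D$ covers at least two depth-$1$ elements, and every depth-$1$ element is covered by at least two maximal elements, so a walk argument (of the same flavour as in Proposition~\ref{proposition: subgroups}) produces a \emph{crown} sub-poset: an even cycle $a_1, b_1, a_2, b_2, \ldots, a_n, b_n$ with $b_i<a_i$ and $b_i<a_{i+1}$ cyclically (indices mod $n$, with $n\geq 2$). Following Cheng, one writes down an explicit contravariant coefficient functor $F$ on $\D$---essentially a rank-one local system on the crown twisted by the orientation of the cycle---and computes $\Ext^2_{\D}(\ul{\Z}, F)\neq 0$ by exhibiting a non-bounding $2$-cocycle in the standard resolution of $\ul{\Z}$. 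This yields $\cld(\D)\geq 2$, as desired.

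The main obstacle will be the explicit $\Ext^2$ computation in the crown step: one must choose $F$ so that the connecting maps in the projective resolution of $\ul{\Z}$ detect the non-trivial cycle class without being killed in higher degree. The initial-object hypothesis on $\CC$ plays a role here by ensuring that the reduction procedure terminates at a well-behaved basepoint (rather than in several connected components), so that the dichotomy $E(\CC)=\{*\}$ versus containing a crown really is exhaustive. The rest of the argument is an adaptation of Cheng's original analysis from covariant to contravariant functor conventions.
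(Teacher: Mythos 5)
First, a remark on context: the paper does not prove this lemma at all --- it imports it from Cheng's paper (reversing variances), so there is no internal argument to compare yours with, and your sketch has to stand on its own. In the first stage, the short exact sequence $0\to\Z[\CC(-,y)]\to\Z[\CC(-,x)]\to\delta_x\to 0$ for a maximal superfluous $x$ covering the unique $y$ is correct, and $\iota^*$ does carry $\Z[\CC(-,x)]$ to $\Z[\CC'(-,y)]$; but ``the dual case proceeds analogously after reversing variances'' is not legitimate as stated. Passing to $\CC^{op}$ exchanges contravariant for covariant functors, and $\cld$ of a poset is not invariant under taking opposites, so the depth-one case needs its own argument. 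Concretely, if the removed vertex $y$ has depth one but covers two or more elements, then $\iota^*\Z[\CC(-,y)]$ is the module ``$\Z$ on a non-principal down-set'', which is in general not projective (e.g.\ $\Z$ on $\{\hat 0<w_1,w_2\}$ with $w_1,w_2$ incomparable has $\Ext^1$ against the skyscraper at $\hat 0$), so $\iota^*$ does not preserve projectives and your restriction argument breaks.

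The fatal gap, however, is in the second stage. A crown, as a standalone poset, has $\cld=1$, not $\geq 2$: if $a_1,\dots,a_n$ are its maximal and $b_1,\dots,b_n$ its minimal elements, the kernel of the canonical surjection $\bigoplus_i\Z[\CC(-,a_i)]\to\ul{\Z}$ is concentrated on the $b_i$ and equals $\bigoplus_i\Z[\CC(-,b_i)]$, which is projective; hence $\Ext^2_{\mathrm{crown}}(\ul{\Z},F)=0$ for every $F$, and the twisted rank-one system you propose can only detect an $\Ext^1$ class (the $H^1$ of the underlying circle). Relatedly, you have misplaced the role of the initial-object hypothesis: it is not a connectivity bookkeeping device (the crown is already connected and irreducible), it is precisely what makes the lower bound true, by gluing the bottom of the poset together so that the first syzygy of $\ul{\Z}$ no longer splits into projective skyscrapers --- compare the $4$-crown ($\cld=1$) with the $4$-crown plus an adjoined minimum ($\cld=2$). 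Finally, even if some sub-poset did have $\Ext^2\neq 0$, you would still need to transfer this to the ambient poset; restriction to a full sub-poset of a poset does not preserve projectives in general, so there is no automatic Shapiro-type inequality $\cld(\mathcal{D})\leq\cld(\CC)$ here. As written, stage two asserts and relies on a false statement, so the proof does not go through.
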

\begin{rem} The lemma in \cite{Cheng} is phrased for finite posets with a terminal object. Since we are considering here contravariant functors instead of covariant functors, we reverse the results accordingly.
\end{rem}
\begin{proof}[Proof of Proposition \ref{proposition: crowns}]
Let $A$ and $B$ be two collections of subgroups of $G$ given by Proposition \ref{proposition: subgroups}. By removing superfluous elements according to a specific regime, we will see that $E(\A_{\PP}(G))$ contains all subgroups in $A$ and in $B$, and is therefore not a singleton. This will be enough by Lemma \ref{lemma: cheng}.

Let $$\CC_0=\A_{\PP}(G)\supseteq \CC_1\supseteq \CC_2\supseteq \cdots\supseteq \CC_n= E(\CC_0).$$
be a chain of posets, where $\CC_{i+1}$ results by removing one superfluous element from $\CC_i$ for every $i$. 	
Since the order of removing the superfluous elements does not change the isomorphism type of $E(\CC_0)$, we can (and we will) assume that
$\CC_{i+1}$ is formed from $\CC_i$ by removing a maximal superfluous element only if there are no superfluous elements of depth 1.

We will prove by induction that $A$ and $B$ are contained in $\CC_i$.
For $i=0$ this is clear. Assume now that $A\cup B\subseteq \CC_i$. Let $x$ be the superfluous element removed from $\CC_i$ to form $\CC_{i+1}$. If $x\notin A\cup B$ we are done. If $x\in A\cup B$ and $x$ has depth 1 in $\CC_i$ then necessarily $x\in B$ since all the elements of $A$ are maximal.
But an element in $B$ of depth 1 is covered by at least two maximal elements in $A$. This implies that $x$ is not superfluous, which is a contradiction.

Assume then that $x$ is a maximal element in $\CC_i$. Then $x\in A$. Since $x$ is superfluous, $x$ covers a unique element $y\in \CC_i$.
There are at least two distinct elements $b_1,b_2\in B$ such that $b_1,b_2\leq x$. Since $y$ is the unique element which $x$ covers, it must hold that $b_1,b_2\leq y$ as well.

We claim that the element $y$ must have depth 1. Indeed, if $y$ is not of depth 1 then there is a chain $y=y_n<y_{n-1}<\cdots<y_0$ where $y_0$ is maximal and $n>1$. Since $x$ covers $y$ it holds that $y_0\neq x$. It follows that $y$ is contained in the intersection $x\cap y_0$. By property (4) of the collection $B$, this implies that the cardinality of $y$ is at most the cardinality of the subgroups in $B$, and since $b_1,b_2\leq y$ we get $b_1=b_2=y$. This contradicts our assumption that $b_1\neq b_2$.

Next, we claim that $y$ is a superfluous element. Since it has depth 1, this means that we formed $\CC_{i+1}$ from $\CC_i$ by removing a maximal superfluous element while $\CC_i$ has a depth 1 superfluous element, contrary to our assumption.

Assume by contradiction that $y$ is not superfluous. Then there is a maximal element $m\neq x$ in $\CC_i$  such that $y\leq m$. In the group $G$ we can thus find a maximal subgroup $m'$ such that $m\leq m'$. We then have the inequality
$$b_1,b_2\leq y\leq x\cap m'.$$
But the cardinality of $b_1$ and $b_2$ is the maximal cardinality among intersection of two different maximal subgroups. This implies that all inequalities are in fact equalities, and we get $b_1=b_2=y=x\cap m'$. But this contradicts the fact that $b_1\neq b_2$, and we are done.
\end{proof}

\begin{ex} It follows from \cite[Example 5.1]{Adem} (see also \cite[Section 6]{BDP}) that $\cld_{\PP}(A_5) \leq 2$, where $\PP$ is the family of all proper subgroups of the alternating group $A_5$. Hence using Theorem \ref{mainalg}, we conclude that $\cld_{\PP}(A_5) = 2$. This example shows that Theorem \ref{mainalg} is optimal.
\end{ex}

\end{document}